 \newtheorem{Theorem}{Theorem}[section]
 \newtheorem{Corollary}[Theorem]{Corollary}
 \newtheorem{Lemma}[Theorem]{Lemma}
 \newtheorem{Proposition}[Theorem]{Proposition}
\newtheorem{Question}[Theorem]{Question}
 \newtheorem{Remark}[Theorem]{Remark}
 \numberwithin{equation}{section}
\begin{document}

\title[The log-psh of fiberwise $\xi-$Bergman kernels for variant functionals]
 {The log-plurisubharmonicity of fiberwise $\xi-$Bergman kernels for variant functionals}

\author{Shijie Bao}
\address{Shijie Bao: Institute of Mathematics, Academy of Mathematics and Systems Science, Chinese Academy of Sciences, Beijing 100190, China.}
\email{bsjie@amss.ac.cn}

\author{Qi'an Guan}
\address{Qi'an Guan: School of
Mathematical Sciences, Peking University, Beijing 100871, China.}
\email{guanqian@math.pku.edu.cn}

\author{Zheng Yuan}
\address{Zheng Yuan: Institute of Mathematics, Academy of Mathematics and Systems Science, Chinese Academy of Sciences, Beijing 100190, China.}
\email{ yuanzheng@amss.ac.cn}

\thanks{}

\subjclass[2020]{32A36 32A70 32D15 32L05 32U05}

\keywords{fiberwise $\xi-$Bergman kernel, plurisubharmonic function, log-plurisubharmonicity, optimal $L^2$ extension, Guan-Zhou Method}

\date{}

\dedicatory{}

\commby{}


\begin{abstract}
    In this paper, we establish the log-plurisubharmonicity of fiberwise $\xi$-Bergman kernels for a family of variant functionals, thereby addressing a question posed by Bo Berndtsson to the authors. As an application, we prove that for a plurisubharmonic function $\phi$ and a locally finitely generated ideal sheaf $\mathscr{I}$ on the polydisc $\Delta^{n+m}=\Delta^n\times\Delta^m$, the set of points $w\in\Delta^m$ for which $\mathcal{I}(\phi|_{H_w})_{(o,w)}\subseteq (\mathscr{I}|_{H_w})_{(o,w)}$ is a pluripolar set, where $H_w$ represents the fiber over $w$, and $\mathcal{I}(\phi|_{H_w})$ denotes the multiplier ideal sheaf of $\phi|_{H_w}$ on $H_w$.
\end{abstract}

\maketitle

    \section{Introduction}\label{Introduction}
    The notion of $\xi-$Bergman kernels was introduced in \cite{BG1} to give a new approach to the effectiveness result of the strong openness property of multiplier ideal sheaves. In \cite{BG1, BG2}, Bao-Guan established the log-plurisubharmonicity of fiberwise $\xi-$Bergman kernels for a fixed functional $\xi$, which is a generalization of Berndtsson's log-plurisubharmonicity of fiberwise Bergman kernels (see \cite{Blogsub,Bern09}).

    Using $\xi-$Bergman kernels (combining with \cite{BG24}), in \cite{BG1} and \cite{BG2} respectively, Bao-Guan completed the optimal $L^2$ extension approaches to the sharp effectiveness result of the strong openness property obtained in \cite{Guan19} (a Hilbert bundle approach can be seen in \cite{NW24}), and the sharp effectiveness result of the $L^p$ strong openness property in \cite{GY}.

    By establishing log-plurisubharmonicity of some versions of generalized Bergman kernels, Bao-Guan also achieved some progress on the strong openness properties related to modules at boundary points (see \cite{BGboundary1, BGboundary2, BGboundary3}), which gave new approaches to the sharp effectiveness results related to Jonsson-Musta\c{t}\u{a}'s conjecture in \cite{BGY1, GMY1, GMY2}.

    Considering the asymptotic behaviors of $\xi-$Bergman kernels on sublevel sets of plurisubharmonic functions, Bao-Guan-Yuan introduced the $\xi-$complex singularity exponents in \cite{BGY22}. The relations among $\xi-$complex singularity exponents, jumping numbers, and complex singularity exponents were studied, and Demailly-Koll\'{a}r's restriction formula (\cite{DK01}) and Demailly-Ein-Lazarsfeld's subadditivity property (\cite{DEL00}) of complex singularity exponents were also generalized to $\xi-$complex singularity exponents in \cite{BGY22}.

    The studies listed above concentrated on the fiberwise $\xi-$Bergman kernels with respect to an invariant functional. In a private communication with Bo Berndtsson, he asked the authors the following question:

    \begin{Question}\label{ques-Berndtsson}
    Are the $\xi-$Bergman kernels log-plurisubharmonic with respect to the functional $\xi$?
    \end{Question}
    
    In this paper, we establish the log-plurisubharmonicity of fiberwise $\xi-$Bergman kernels for a family of variant functionals, which answers Question \ref{ques-Berndtsson} affirmatively.

    \subsection{Notations and Conventions}
      We use the notation $\mathbb{N}\coloneqq \mathbb{Z}_{\ge 0}$ in this paper. In \cite{BG1}, the following linear space of sequences of complex numbers was considered:
      \[\ell_1^{(n)}\coloneqq \left\{\xi=(\xi_{\alpha})_{\alpha\in\mathbb{N}^n}  \colon \sum_{\alpha\in\mathbb{N}^n}|\xi_{\alpha}|\rho^{|\alpha|}<+\infty \text{ for any } \rho>0\right\},\]
      where any element in $\ell^{(n)}_1$ can be seen as a linear functional on the ring of germs of holomorphic functions. More precisely, for $z_0\in\mathbb{C}^n$, $\xi=(\xi_{\alpha})\in\ell_1^{(n)}$, and $F(z)=\sum_{\alpha\in\mathbb{N}^n}a_{\alpha}(z-z_0)^{\alpha}\in\mathcal{O}_{\mathbb{C}^n,z_0}$, the value that $\xi$ acts on $F$ is defined as 
      \[(\xi\cdot F)(z_0)\coloneqq \sum_{\alpha\in\mathbb{N}^n}\xi_{\alpha}\frac{F^{(\alpha)}(z_0)}{\alpha!}=\sum_{\alpha\in\mathbb{N}^n}\xi_{\alpha}a_{\alpha}.\]
    
       Now we recall the definition of $\xi-$Bergman kernel. Let $D$ be a domain in $\mathbb{C}^n$, and $\psi$ a plurisubharmonic function on $D$. For any $\xi\in\ell_1^{(n)}$, the (weighted) \emph{$\xi-$Bergman kernel with respect to $\xi$} is denoted by
      \begin{equation*}
          K^{\psi}_{\xi,D}(z)\coloneqq \sup_{f\in A^2(D,e^{-\psi})}\frac{|(\xi\cdot f)(z)|^2}{\int_D|f|^2e^{-\psi}},
      \end{equation*}
      where $A^2(D,e^{-\psi})\coloneqq \{f\in\mathcal{O}(D) \colon \int_D|f|^2e^{-\psi}<+\infty\}$, and we denote $K^{\psi}_{\xi,D}(z)=0$ if $A^2(D,e^{-\psi})=\{0\}$.

      \subsection{Main Result}
    
      In \cite{BG2} (see also \cite{BG1}), the following log-plurisubharmonicity property was established for fiberwise $\xi-$Bergman kernels, which is a generalization of Berndtsson's log-plurisubharmonicity property for fiberwise Bergman kernels (see \cite{Blogsub}). 
      
      Let $\Omega$ be a pseudoconvex domain in $\mathbb{C}^{n+1}$ with coordinate $(z,w)$, where $z\in\mathbb{C}^n$, $w\in\mathbb{C}$. Let $p$ be the natural projection $p(z,w)=w$ on $\Omega$, and $D\coloneqq p(\Omega)$. For any $w\in D$, assume that each $\Omega_w\coloneqq p^{-1}(w)\cap\Omega$ is a bounded domain in $\mathbb{C}^n$. Let $\psi$ be a plurisubharmonic function on $\Omega$, and let $K^{\psi}_{\xi,w}(z)\coloneqq K^{\psi|_{\Omega_{w}}}_{\xi,\Omega_w}(z)$ be the Bergman kernels on the domains $\Omega_w$ with respect to some fixed $\xi\in \ell_1^{(n)}$.
    
     \begin{Theorem}[\cite{BG1, BG2}]\label{xi-log-psh}
        $\log K^{\psi}_{\xi,w}(z)$ is a plurisubharmonic function with respect to $(z,w)$ on $\Omega$.
     \end{Theorem}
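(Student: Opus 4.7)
The plan is to extend the Guan--Zhou optimal $L^2$-extension proof of Berndtsson's log-plurisubharmonicity of fiberwise Bergman kernels to the $\xi$-weighted setting. Since plurisubharmonicity on $\Omega\subseteq\mathbb{C}^{n+1}$ is tested along complex discs, it suffices to check subharmonicity of $t\mapsto u(t):=\log K^{\psi}_{\xi,w(t)}(z(t))$ for every holomorphic disc $\phi(t)=(z(t),w(t))$ in $\Omega$. If $\phi$ is tangent to the fiber at $\phi(0)=(z_0,w_0)$, i.e.\ $w'(0)=0$, subharmonicity reduces to log-plurisubharmonicity of $z\mapsto K^{\psi}_{\xi,w_0}(z)$ on $\Omega_{w_0}$, which is immediate from writing this kernel as a supremum of squared absolute values of holomorphic functions of $z$. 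In the transverse case one may reparametrize so that $w(t)=w_0+t$ for $t\in\Delta_r$, with $z(t)$ holomorphic and $z(0)=z_0$.

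The structural input is that for any holomorphic $F$ on an open subset of $\Omega$, the series
\[(\xi\star F)(z,w):=\sum_{\alpha\in\mathbb{N}^n}\xi_{\alpha}\frac{1}{\alpha!}\frac{\partial^{|\alpha|}F}{\partial z^{\alpha}}(z,w)\]
converges locally uniformly (by the decay condition defining $\ell_1^{(n)}$) to a holomorphic function of $(z,w)$; hence $g(t):=(\xi\star F)(\phi(t))$ is holomorphic in $t$, and the definition of the $\xi$-Bergman kernel gives
\[|g(t)|^2\leq K^{\psi}_{\xi,w_0+t}(z(t))\cdot\|F(\cdot,w_0+t)\|^2_{L^2(\Omega_{w_0+t},e^{-\psi})}.\]
Take a near-extremal $f_0\in A^2(\Omega_{w_0},e^{-\psi})$ normalized by $(\xi\cdot f_0)(z_0)=1$, so $K^{\psi}_{\xi,w_0}(z_0)=1/\|f_0\|^2=e^{u(0)}$, and apply the Guan--Zhou sharp optimal $L^2$-extension theorem to extend $f_0$ to a holomorphic $F$ on $p^{-1}(\Delta_r(w_0))\cap\Omega$ with $\int|F|^2e^{-\psi}\leq\pi r^2\|f_0\|^2$. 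Then $g(0)=(\xi\cdot f_0)(z_0)=1$, and combining Fubini, the pointwise bound $\|F(\cdot,w_0+t)\|^2\geq|g(t)|^2e^{-u(t)}$, Jensen's inequality on the concave logarithm, and the sub-mean value inequality for the subharmonic function $\log|g|^2$ (with $\log|g(0)|^2=0$) yields
\[u(0)\leq\frac{1}{\pi r^2}\int_{\Delta_r}u(t)\,dA(t),\]
which is the desired area sub-mean value inequality.

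The principal obstacle is handling the simultaneous variation of the evaluation point $z(t)$, absent from Berndtsson's original setting: one must work with the composite $g(t)=(\xi\star F)(\phi(t))$ in place of a pointwise evaluation, and crucially use that $(\xi\star F)$ is a holomorphic function of the joint variable $(z,w)$. This joint holomorphicity --- the mechanism that makes $g$ holomorphic in $t$ and $\log|g|^2$ subharmonic --- is exactly the content of the $\ell_1^{(n)}$ hypothesis on $\xi$. A related delicate point is that the sharp constant $\pi r^2$ in the Guan--Zhou extension is essential for closing the estimate: a non-sharp bound would fail to yield the correct sub-mean value inequality, since Berndtsson's own log-plurisubharmonicity for holomorphic sections already forces the average $\frac{1}{\pi r^2}\int_{\Delta_r}\|F(\cdot,w_0+t)\|^2\,dA(t)$ to exceed the center value $\|f_0\|^2$.
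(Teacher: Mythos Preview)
Your approach is essentially the same as the paper's: the Guan--Zhou method, combining the optimal $L^2$ extension estimate, Jensen's inequality, and subharmonicity of $\log|(\xi\star F)|^2$ along the disc. One pleasant refinement is that you allow the evaluation point $z(t)$ to move with $t$ in the transverse case, whereas the paper handles this by an implicit ``without loss of generality'' reduction to discs with $z$ held fixed (valid because a fiberwise translation $\tilde z=z-c(w)$ preserves both the fibration and the $\xi$-Bergman kernel); your version simply avoids that reduction.

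There is, however, one genuine omission. Plurisubharmonicity requires \emph{both} upper-semicontinuity and the sub-mean value inequality, and you address only the latter. Upper-semicontinuity of $(z,w)\mapsto\log K^{\psi}_{\xi,w}(z)$ is not automatic here: the supremum defining $K^{\psi}_{\xi,w}(z)$ is taken over the Hilbert space $A^2(\Omega_w,e^{-\psi|_{\Omega_w}})$, which itself varies with $w$, so the usual ``sup of continuous functions is lower-semicontinuous'' heuristic goes the wrong way. The paper (in its proof of the more general Theorem~\ref{log-psh-wrt-xi}) handles this explicitly: for a sequence $(z_j,w_j)\to(z_0,w_0)$ realizing the $\limsup$, one chooses extremal $f_j\in A^2(\Omega_{w_j},e^{-\psi_{w_j}})$ of unit norm (Lemma~\ref{lem-F0}), extracts via Montel a locally uniform limit $f_0$ on $q(\Omega_{w_0})$, uses a continuity lemma for the $\xi$-evaluation to get $(\xi\cdot f_0)(z_0)=\lim_j(\xi\cdot f_j)(z_j)$, and applies Fatou's lemma together with the upper-semicontinuity of $\psi$ to obtain $\|f_0\|\le 1$. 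You should add this step; the rest of your argument then matches the paper's.
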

     
    In the present paper, we consider the log-plurisubharmonicity of the fiberwise $\xi-$Bergman kernels with respect to variant functionals. We generalize Theorem \ref{xi-log-psh} to the following case.

    Let $\Omega$ be a pseudoconvex domain in $\mathbb{C}^{n+m}$ with coordinate $(z,w)$, where $z\in\mathbb{C}^n$, $w\in\mathbb{C}^m$. Let $p$, $q$ be the natural projections $p(z,w)=w$, $q(z,w)=z$ on $\Omega$, and $D\coloneqq p(\Omega)$. For any $w\in D$, denote $\Omega_w\coloneqq p^{-1}(w)\cap\Omega$. Let $\xi(w)$ be a functional-valued function on $D$, where $\xi(w)=(\xi_{\alpha}(w))_{\alpha\in \mathbb{N}^n}$ and $\xi_{\alpha}(w)\in \ell_1^{(n)}$ for any $w\in D$. We call that $\xi(w)$ is \emph{holomorphic} with respect to $w$ if $\xi_{\alpha}(w)$ is holomorphic with respect to $w\in D$ for each $\alpha\in\mathbb{N}^n$. And we call that $\xi(\cdot)$ satisfies the \emph{locally uniformly bounded property} on $D$, if for any $w\in D$ and any $\rho>0$, there exists a neighborhood $V_{w,\rho}$ of $w$ such that
    \[\sup_{w'\in V_{w,\rho}}\sum_{\alpha\in\mathbb{N}^n}|\xi_{\alpha}(w')|\rho^{|\alpha|}<+\infty\] 
    holds. 

    \begin{Remark}\label{rem-the.only.remark}
    Suppose the functional-valued $\xi(\cdot)$ satisfies the locally uniformly bounded property. Then for any compact subset $K$ of $D$, one has
    \[\sup_{w\in K}\sum_{\alpha\in\mathbb{N}^n}|\xi_{\alpha}(w)|\rho^{|\alpha|}<+\infty\]
    for any $\rho>0$. Therefore, the above neighborhood $V_{w,\rho}$ in the definition of ``locally uniformly bounded property" can be chosen to be independent of $\rho$.

    In addition, if the functional-valued function $\xi(\cdot)$ is holomorphic, and there exists some $N>0$ such that
    $\xi_{\alpha}(w)=0$ for all $\alpha\in\mathbb{N}^n$ with $|\alpha|>N$ and all $w\in D$, then it is clear that $\xi(\cdot)$ satisfies the locally uniformly bounded property on $D$.
    \end{Remark}

    We prove the following theorem, which is the main theorem of the present paper.
    
    \begin{Theorem}[Main Theorem]\label{log-psh-wrt-xi}
    Let $\psi$ be a plurisubharmonic function on $\Omega$, and let $\xi(\cdot)$ be a functional-valued function on $D$ such that $\xi(\cdot)$ is holomorphic and satisfies the locally uniformly bounded property on $D$. Denote $\psi_w\coloneqq \psi|_{\Omega_w}$ for any $w\in D$. Let $K^{\psi_w}_{\xi(w),\Omega_w}(z)$ be the Bergman kernels on the domains $\Omega_w$ with respect to $\xi(w)$. Then $\log K^{\psi_w}_{\xi(w),w}(z)$ is a plurisubharmonic function with respect to $(z,w)$ on $\Omega$.
    \end{Theorem}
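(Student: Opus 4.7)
My plan is to reduce the variant-functional case to Theorem \ref{xi-log-psh} (the fixed-$\xi$ case) via a direct competitor construction based on the optimal $L^2$ extension theorem. Since plurisubharmonicity is a local property that can be checked along complex disks, I fix any $(z_0,w_0)\in\Omega$, work in a small polydisc around it, and aim to establish the sub-mean value inequality for $\log K^{\psi_w}_{\xi(w),\Omega_w}(z)$ along an arbitrary complex disk through $(z_0,w_0)$, together with upper semi-continuity of the whole expression in $(z,w)$.

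For each $\varepsilon>0$, I would pick a near-extremal $f_0\in A^2(\Omega_{w_0},e^{-\psi_{w_0}})$ for $K^{\psi_{w_0}}_{\xi(w_0),\Omega_{w_0}}(z_0)$ and extend it by the Ohsawa-Takegoshi/Guan-Zhou theorem to a holomorphic function $F$ on a sub-pseudoconvex neighborhood $\tilde\Omega\subset\Omega$ of the fiber $\Omega_{w_0}$ with $\int_{\tilde\Omega}|F|^2 e^{-\psi}$ controlled by $\int_{\Omega_{w_0}}|f_0|^2 e^{-\psi_{w_0}}$. The decisive observation is that
\[
G(z,w)\coloneqq\sum_{\alpha\in\mathbb{N}^n}\xi_\alpha(w)\,\frac{1}{\alpha!}\,\frac{\partial^{|\alpha|} F}{\partial z^\alpha}(z,w)
\]
is a holomorphic function of $(z,w)$ on $\tilde\Omega$, so that $\log|G|^2$ is plurisubharmonic there. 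Indeed, Cauchy estimates in $z$ bound each Taylor coefficient of $F(\cdot,w)$ by a constant times $\rho^{-|\alpha|}$ uniformly on compact subsets of $\tilde\Omega$, while the locally uniformly bounded property (as recalled in Remark \ref{rem-the.only.remark}) forces the series to converge uniformly on compacta to a limit of holomorphic partial sums.

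From here the Guan-Zhou scheme of \cite{BG1, BG2} applies essentially verbatim, with $G(z,w)$ playing the role that the fixed-functional numerator plays there: one reformulates the Bergman kernel as the reciprocal of the minimum $m(z,w)\coloneqq \inf\{\|f\|^2_w : f\in A^2(\Omega_w,e^{-\psi_w}),\ (\xi(w)\cdot f)(z)=1\}$, uses suitable rescalings of $F$ as competitors on nearby fibers (well-defined since near-extremality of $f_0$ forces $G(z_0,w_0)\ne 0$), and combines the plurisubharmonicity of $\log|G|^2$ with the $L^2$ control coming from the optimal extension to derive the sub-mean value inequality at $(z_0,w_0)$ upon letting $\varepsilon\to 0$. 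Upper semi-continuity follows from the same competitor construction together with the continuous dependence of $(z,w)\mapsto(\xi(w)\cdot h)(z)$ for fixed germs $h$, again guaranteed by the locally uniformly bounded property. The main obstacle is entirely in the construction of $G$: the locally uniformly bounded hypothesis is exactly what is needed to upgrade the formal series defining $G(z,w)$ to a bona fide holomorphic function, and once this is in hand the reduction to Theorem \ref{xi-log-psh} becomes relatively routine.
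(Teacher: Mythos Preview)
Your approach is essentially the paper's: both rely on the optimal $L^2$ extension to build a holomorphic $F$ from a (near-)extremal on the central fiber, observe that $G(z,w)=(\xi(w)\cdot F_w)(z)$ is holomorphic in $(z,w)$ (this is exactly the paper's Lemma \ref{lem-hol}, proved just as you indicate via Cauchy estimates and the locally uniformly bounded property), and then combine the sub-mean value property of $\log|G|^2$ with Jensen/Fubini on the $L^2$ side to obtain the mean-value inequality. Your framing as a ``reduction to Theorem \ref{xi-log-psh}'' is slightly misleading---you are not invoking the fixed-$\xi$ result but rather rerunning the same Guan--Zhou argument with the additional observation about $G$---but the substance matches the paper.

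One point needs correction: your sketch of upper semi-continuity via ``the same competitor construction'' goes in the wrong direction. Extending an extremal $f_0$ from $\Omega_{w_0}$ and restricting to nearby fibers produces competitors \emph{on those nearby fibers}, hence \emph{lower} bounds for $K^{\psi_w}_{\xi(w),\Omega_w}(z)$ near $(z_0,w_0)$; this gives lower, not upper, semi-continuity. The paper instead takes exact extremals $f_j$ on $\Omega_{w_j}$ normalized to unit $L^2$ norm (Lemma \ref{lem-F0}), extracts a compactly convergent subsequence via Montel to obtain some $f_0$ on $q(\Omega_{w_0})$, uses Lemma \ref{lem-conti} (continuity of $(z,w,f)\mapsto(\xi(w)\cdot f)(z)$ under compact convergence and the locally uniformly bounded property) for the numerator, and Fatou together with upper semi-continuity of $\psi$ for the denominator. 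You will need that Montel/Fatou argument in place of the extension idea for the upper semi-continuity step.
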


    This is a generalization of Theorem \ref{xi-log-psh}. Particularly, if fixing the domains $\Omega_w$ and the plurisubharmonic function $\psi_w$, then we have the following corollary of Theorem \ref{log-psh-wrt-xi}, which affirmatively answers Question \ref{ques-Berndtsson}.

    \begin{Corollary}
    Let $D_1$ be a pseudoconvex domain in $\mathbb{C}^n$, and $D_2$ a domain in $\mathbb{C}^m$. Let $\psi$ be a plurisubharmonic function on $D_1$. Assume that $\xi(\cdot)$ is a functional-valued function on $D_2$ such that $\xi(w)\in \ell^{(n)}_1$ for each $w\in D_2$, and $\xi(\cdot)$ is holomorphic and satisfies the locally uniformly bounded property on $D_2$. The for each $z\in D_1$, the function $\log K_{\xi(w), D_1}^{\psi}(z)$ is a plurisubharmonic function on $D_2$ with respect to $w$. 
    \end{Corollary}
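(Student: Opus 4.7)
My plan is to adapt the Guan-Zhou optimal $L^2$ extension argument used for Theorem~\ref{xi-log-psh} (the fixed-functional case) to allow the functional $\xi$ to vary holomorphically in $w$. The critical observation that makes this adaptation possible is that, for any holomorphic $F\in\mathcal{O}(\Omega)$ with fibre restrictions $f_w:=F|_{\Omega_w}$, the map
\[
(z,w)\,\longmapsto\,(\xi(w)\cdot f_w)(z)\;=\;\sum_{\alpha\in\mathbb{N}^n}\xi_\alpha(w)\,\frac{\partial_z^{\alpha}F(z,w)}{\alpha!}
\]
is itself a holomorphic function on $\Omega$. This follows from the holomorphy of each $\xi_\alpha(w)$ in $w$, the holomorphy of each $\partial_z^{\alpha}F$ in $(z,w)$, and absolute and uniform convergence of the series on compact subsets, which is guaranteed by Cauchy estimates on $\partial_z^{\alpha}F$ together with the locally uniformly bounded hypothesis on $\xi(\cdot)$. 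Conceptually, this says that the assignment $(z,w)\mapsto \xi(w)\circ\mathrm{ev}_z$ is a holomorphic section of the dual of the Berndtsson direct-image bundle $w\mapsto A^2(\Omega_w,e^{-\psi_w})$ pulled back to $\Omega$, and the theorem should be viewed as a log-psh statement for the norm of this section.

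Concretely, I would proceed in three stages. First, localize and regularize: since plurisubharmonicity is local, fix $(z_0,w_0)\in\Omega$, restrict to a pseudoconvex polydisc neighborhood, and carry out the usual smoothing of $\psi$ and exhaustion of $\Omega$ so as to work with smooth strictly pseudoconvex data. Second, reduce to a ``polynomial'' $\xi$ by truncation: set $\xi^N(w):=(\xi_\alpha(w)\mathbf{1}_{|\alpha|\le N})_\alpha$, which still satisfies all hypotheses, and use the Cauchy estimate
\[
\left|\frac{\partial_z^{\alpha}f(z)}{\alpha!}\right|\le C_{z,r}\,\|f\|_{L^2(\Omega_w,e^{-\psi_w})}\,r^{-|\alpha|}
\]
on relatively compact subsets, combined with the $\ell_1^{(n)}$-summability of $\xi(w)$ uniformly on compact $w$-sets (see Remark~\ref{rem-the.only.remark}), to deduce $K^{\psi_w}_{\xi^N(w),\Omega_w}(z)\to K^{\psi_w}_{\xi(w),\Omega_w}(z)$ uniformly on compact subsets of $\Omega$. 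Since uniform limits of plurisubharmonic functions are plurisubharmonic, it suffices to handle the case where $\xi(w)$ has only finitely many nonzero components. Third, in this polynomial case run the Guan-Zhou method of \cite{BG1,BG2}: test pshness at $(z_0,w_0)$ by picking approximate fibre extremizers for $K^{\psi_{w_0}}_{\xi(w_0),\Omega_{w_0}}(z_0)$, extend them to $\Omega$ via the Ohsawa-Takegoshi theorem, and use the opening holomorphy observation to conclude that the resulting pairings $(\xi(w)\cdot F(\cdot,w))(z)$ furnish the holomorphic test data needed to close the standard contradiction argument.

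The main obstacle lies in the third stage: in the fixed-$\xi$ case of Theorem~\ref{xi-log-psh}, extremal functions on neighbouring fibres $\Omega_w$ solve the same extremal problem and propagate naturally, whereas with $\xi(w)$ varying the problem itself moves with $w$, so one cannot simply transport a single extremizer. The remedy is to extend a fibre extremizer at $w_0$ via Ohsawa-Takegoshi to a holomorphic section on $\Omega$ and then pair it with $\xi(w)$ for all neighbouring $w$; by the opening observation the resulting pairing is holomorphic in $(z,w)$, and by the locally uniformly bounded hypothesis the $L^2$ estimates remain uniform over a neighbourhood of $w_0$, controlling the Taylor tails of $\xi$ precisely in the regime needed for the extension theorem. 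With this uniformity achieved, the Guan-Zhou argument for Theorem~\ref{xi-log-psh} transplants with essentially only notational changes, and the three stages combine to yield Theorem~\ref{log-psh-wrt-xi}.
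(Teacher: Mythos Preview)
Your core strategy is correct and is essentially the paper's: the Corollary is deduced from Theorem~\ref{log-psh-wrt-xi}, whose proof is exactly the Guan--Zhou method you outline---pick a fibre extremizer at $w_0$ (Lemma~\ref{lem-F0}), extend it via the optimal $L^2$ extension theorem (Lemma~\ref{optimal-L2-ext}), and use holomorphy of $(z,w)\mapsto(\xi(w)\cdot F_w)(z)$ (your ``critical observation'', which is Lemma~\ref{lem-hol}) to get subharmonicity of $\log|(\xi(w)\cdot F_w)(z_0)|^2$. The paper then closes not by contradiction but by a direct chain: optimal extension estimate, Fubini, Jensen's inequality, and the definition of $K_{\xi(w)}$ combine to give the sub-mean-value inequality on discs in the $w$-direction; upper-semicontinuity is handled separately via Montel, Fatou, and Lemma~\ref{lem-conti}.

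Where you diverge from the paper is in your first two stages, and both are dispensable. The paper neither regularizes $\psi$ nor exhausts $\Omega$ (the optimal $L^2$ extension theorem already tolerates singular psh weights), and it never truncates $\xi$: Lemma~\ref{lem-hol} delivers holomorphy of the pairing for the full $\xi$ directly. Your truncation step also carries a small gap: uniform convergence $K_{\xi^N}\to K_{\xi}$ on compacta (which does follow from your Cauchy-estimate bound) does \emph{not} imply uniform convergence of $\log K_{\xi^N}\to\log K_{\xi}$ near the possible zero set of $K_{\xi}$, so ``uniform limits of psh are psh'' does not apply as stated. This is fixable (e.g.\ via $\limsup$ and upper-semicontinuous regularization), but the cleanest fix is to drop the truncation altogether and argue directly as the paper does.
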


    \subsection{An Application}
    We give an application of Theorem \ref{log-psh-wrt-xi} in the following.

    Let $\Delta^{n+m}_{(z,w)}=\Delta^n_z\times\Delta^m_w$ be the unit polydisc in $\mathbb{C}^{n+m}$ with the origin $o_{(z,w)}=(o_z, o_w)$, where $z$ and $w$ are the coordinates on $\Delta^n_z$ and $\Delta^m_w$ respectively. Denote by $\pi$ the projection $\Delta^n_z\times\Delta^m_w \to \Delta^m_w$, and $H_{w'}\coloneqq \pi^{-1}(w')\cap\Delta^{n+m}_{(z,w)}$ for each $w'\in\Delta^m_w$.

    Let $\mathscr{I}$ be a locally finitely generated ideal sheaf on $\Delta^{n+m}_{(z,w)}$. Then $\mathscr{I}|_{H_{w'}}$, which is $\mathcal{I}$ restricted on the fiber $H_{w'}$, is an ideal sheaf on $H_{w'}$. We can obtain the following corollary of Theorem \ref{log-psh-wrt-xi}.
    
    \begin{Corollary}\label{cor:pluripolar}
        Let $\phi$ be a plurisubharmonic function on $\Delta^{n+m}_{(z,w)}$. Then the set
        \[\Lambda_{\phi,\mathscr{I}}\coloneqq \left\{w'\in\Delta^m_w\colon \mathcal{I}\left(\phi|_{H_{w'}}\right)_{(o_z,w')}\subseteq (\mathscr{I}|_{H_{w'}})_{(o_z,w')}\right\}\]
        is either the whole $\Delta_w^m$ or a pluripolar subset of $\Delta_w^m$.

        Moreover, if $(\mathscr{I}|_{H_{w'}})_{(o_z,w')}=I$ for every $w'\in\Delta^m_w$, where $I$ is an ideal of $\mathcal{O}_{\mathbb{C}^n,o}$, then the set $\Lambda(\phi,\mathscr{I})$ is either the whole $\Delta_w^m$ or a complete pluripolar subset of $\Delta_w^m$.
    \end{Corollary}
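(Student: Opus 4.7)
The strategy is to apply Theorem \ref{log-psh-wrt-xi} with a carefully chosen holomorphic functional-valued $\xi(\cdot)$ so as to produce a plurisubharmonic function on $\Delta^m$ whose $-\infty$ locus contains $\Lambda_{\phi,\mathscr{I}}$ but is not identically $-\infty$, from which pluripolarity will follow. Assuming $\Lambda_{\phi,\mathscr{I}}\neq\Delta^m$, fix $w^*\notin\Lambda_{\phi,\mathscr{I}}$ and $f^*\in\mathcal{I}(\phi|_{H_{w^*}})_{(o_z,w^*)}\setminus(\mathscr{I}|_{H_{w^*}})_{(o_z,w^*)}$.

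To reduce to a finite jet, I would use Krull's intersection theorem in the Noetherian local ring $\mathcal{O}_{\mathbb{C}^n,o_z}$: there exists $N\in\mathbb{N}$ with $f^*\notin(\mathscr{I}|_{H_{w^*}})_{(o_z,w^*)}+\mathfrak{m}_{o_z}^N$, yielding a functional $\xi^*$ supported on multi-indices $|\alpha|<N$ (hence trivially in $\ell_1^{(n)}$) which vanishes on $(\mathscr{I}|_{H_{w^*}})_{(o_z,w^*)}$ and satisfies $\xi^*\cdot f^*\neq 0$. The next step is to extend $\xi^*$ holomorphically to a family $\xi(w)$ on a Stein neighborhood $V$ of $w^*$ with $\xi(w^*)=\xi^*$ and $\xi(w)\cdot(\mathscr{I}|_{H_w})_{(o_z,w)}=0$ for all $w\in V$. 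Writing local generators $g_1,\dots,g_k$ of $\mathscr{I}$ near $(o_z,w^*)$, this translates into a finite linear system $M(w)\xi(w)=0$ with $M(w)$ holomorphic in $w$; since $\mathscr{I}$ is coherent, $\ker M^{\top}$ is a coherent sheaf on $\Delta^m$, and Cartan's Theorem~A together with Nakayama's lemma furnish the desired $\xi(w)$.

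Since $\xi(w)$ has finite support in $\alpha$ and holomorphic coefficients, Remark \ref{rem-the.only.remark} gives the locally uniformly bounded property, so Theorem \ref{log-psh-wrt-xi} yields that $\rho(w):=\log K^{\phi|_{H_w}}_{\xi(w),H_w}(o_z)$ is plurisubharmonic on $V$. For $w\in V\cap\Lambda_{\phi,\mathscr{I}}$, the containment $\mathcal{I}(\phi|_{H_w})\subseteq\mathscr{I}|_{H_w}$ at $(o_z,w)$ forces $\xi(w)$ to annihilate every germ at $o_z$ of $F\in A^2(H_w,e^{-\phi|_{H_w}})$, giving $\rho(w)=-\infty$. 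At $w^*$, a H\"ormander $L^2$-solution on the pseudoconvex polydisc $H_{w^*}=\Delta^n$ with weight $\phi|_{H_{w^*}}+2N\log|z|$ produces $F^*\in A^2(H_{w^*},e^{-\phi|_{H_{w^*}}})$ agreeing with $f^*$ modulo $\mathfrak{m}_{o_z}^N$, so $\xi^*\cdot F^*=\xi^*\cdot f^*\neq 0$ and $\rho(w^*)>-\infty$. This establishes local pluripolarity of $\Lambda_{\phi,\mathscr{I}}$ near $w^*$; Josefson's theorem on the Stein $\Delta^m$ then promotes it to global pluripolarity. For the moreover statement, when $(\mathscr{I}|_{H_{w'}})_{(o_z,w')}\equiv I$, $\xi$ can be taken independent of $w$; using a basis of the annihilator of $I+\mathfrak{m}_{o_z}^N$ in $(\mathcal{O}_{\mathbb{C}^n,o_z}/\mathfrak{m}_{o_z}^N)^*$ and summing log-Bergman kernels across jet orders $N$, I expect to construct a psh function on $\Delta^m$ whose $-\infty$ locus equals $\Lambda_{\phi,\mathscr{I}}$ exactly, establishing complete pluripolarity.

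The most delicate step will be the holomorphic extension of $\xi^*$: when the rank of $M(w)$ drops at $w^*$, the fiber of $\ker M^{\top}$ at $w^*$ can be strictly smaller than $\ker M(w^*)^{\top}$, so an arbitrary $\xi^*\in\ker M(w^*)^{\top}$ need not be the value at $w^*$ of any holomorphic section. I would handle this by either enlarging $N$ so that the rank-drop is absorbed at higher jet orders (via the coherence of the quotient sheaf $\mathcal{O}_{\Delta^{n+m}}/\mathscr{I}$), or by perturbing $\xi^*$ within $\ker M(w^*)^{\top}$ into the image of the fiber map while retaining $\xi^*\cdot f^*\neq 0$.
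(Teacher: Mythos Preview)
Your strategy diverges from the paper's at the crucial step of producing the annihilating functionals. You attempt to extend a single functional $\xi^*$ from a chosen base point $w^*\notin\Lambda_{\phi,\mathscr I}$, whereas the paper (Lemmas~\ref{lem-linear.algebra} and~\ref{lem-construct.xi-i}) writes down, via explicit cofactor formulas, finitely many holomorphic families $\xi_1(w),\dots,\xi_s(w)$ on all of $\Delta^m$ that span the annihilator of $I_w$ for every $w$ in a Zariski-open set $U_N$; the analytic set $\Delta^m\setminus U_N$ is then absorbed as an additional pluripolar piece. With $\Psi_N(w)=\max_i\log K^{\phi|_{H_w}}_{\xi_i(w),H_w}(o_z)$ one obtains $\Lambda\cap U_N=\{\Psi_N=-\infty\}\cap U_N$ directly, and no extension from a prescribed point is ever needed. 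The passage to general $\mathscr I$ is also organized differently: the paper writes $\Lambda=\bigcap_N\Lambda(\phi,\mathscr I+\mathcal I_{\Delta^m_w}^N)$ and handles each $N$ separately, rather than fixing one $N$ from a single witness $f^*$.

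The extension step is a genuine gap, and your two proposed fixes do not close it. Take $n=m=1$, $\mathscr I=(z^2,wz)$, and $\phi=2\log|z|$; then $I_w=(z)$ for $w\neq0$, $I_0=(z^2)$, and $\mathcal I(\phi|_{H_w})_o=(z)$ for every $w$, so $w^*=0$ is the unique point outside $\Lambda$ and every witness $f^*\in(z)\setminus(z^2)$ satisfies $f^*(0)=0$. Any holomorphic family $\xi(w)=(\xi_\alpha(w))_{\alpha\ge0}$ of bounded degree with $\xi(w)\cdot I_w=0$ for all $w$ must have $\xi_\alpha(w)=0$ for all $\alpha\ge1$ and $w\neq0$, hence $\xi_\alpha\equiv0$ for $\alpha\ge1$; the only values attainable at $w^*=0$ are scalar multiples of $f\mapsto f(0)$, \emph{regardless of $N$}. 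Every such $\xi^*$ annihilates every admissible $f^*$, so neither enlarging $N$ nor perturbing $\xi^*$ into the image of the fiber map yields $\xi^*\cdot f^*\neq0$. A secondary issue is your appeal to Josefson: exhibiting $\Lambda\cap V$ as pluripolar in a neighborhood $V$ of some $w^*\notin\Lambda$ does not establish local pluripolarity at points of $\Lambda$, which is what Josefson's theorem requires; the paper's global construction of $\Psi_N$ on $\Delta^m$ sidesteps this entirely.
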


    Here
    \[\mathcal{I}\left(\phi|_{H_{w'}}\right)_{(o_z,w')}\coloneqq \left\{f\in\mathcal{O}_{H_{w'},(o_z,w')}\colon |f|^2e^{-\phi|_{H_{w'}}} \text{ is integrable near } (o_z,w')\right\}\]
    denotes the stalk of the multiplier ideal sheaf of $\phi|_{H_{w'}}$ at the point $(o_z,w')$ on $H_{w'}$.

    Particularly, if for every $w'\in\Delta^m_w$, $(\mathscr{I}|_{H_{w'}})_{(o_z,w')}$ equals the maximal ideal of the local ring $\mathcal{O}_{H_{w'}, (o_z,w')}$, then due to the openness property of multiplier ideal sheaves (see \cite{Bern13, GZ15}) we have
    \[\Lambda_{\phi,\mathscr{I}}=\left\{w'\in\Delta^m_w\colon c_{(o_z,w')}\left(\phi|_{H_{w'}}\right)\ge 1\right\},\]
    where $c_{(o_z,w')}\left(\phi|_{H_{w'}}\right)$ is the complex singularity exponent of $\phi|_{H_{w'}}$ at the point $(o_z,w')$. It has been shown in \cite[Lemma 4.6]{GZ2020} that the result in Corollary \ref{cor:pluripolar} holds in this special case due to Berndtsson's log-plurisubharmonicity of fiberwise classical Bergman kernels.

    \section{Preparations}
    We recall some basic lemmas in \cite{BG1, BG2}. Lemma \ref{lem-holo-wrt-z} shows that a holomorphic function  remains holomorphic after applying functionals from $\ell^{(n)}_1$.

    \begin{Lemma}[\cite{BG1}]\label{lem-holo-wrt-z}
        Let $D$ be a domain in $\mathbb{C}^n$, and let $F$ be a holomorphic function on $D$. Then $(\xi\cdot F)(z)$ is also a holomorphic function on $D$, where $\xi\in \ell_1^{(n)}$.
    \end{Lemma}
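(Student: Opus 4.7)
The plan is to realize $(\xi\cdot F)(z)$ as a locally uniform limit of holomorphic partial sums on $D$, and then invoke the Weierstrass convergence theorem. By the definition of the action of $\xi$, at every $z_0\in D$ we have $(\xi\cdot F)(z_0)=\sum_{\alpha\in\mathbb{N}^n}\xi_\alpha F^{(\alpha)}(z_0)/\alpha!$. Each truncation $S_N(z)\coloneqq\sum_{|\alpha|\le N}\xi_\alpha F^{(\alpha)}(z)/\alpha!$ is a finite complex-linear combination of the holomorphic functions $F^{(\alpha)}$, hence is itself holomorphic on $D$; so it suffices to establish locally uniform convergence of $S_N$ to $\xi\cdot F$.

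To extract the locally uniform bound I would fix $z_0\in D$ and choose $r>0$ so small that the closed polydisc $\overline{\Delta(z_0,2r)}$ sits inside $D$, then set $M\coloneqq\sup_{\overline{\Delta(z_0,2r)}}|F|<+\infty$. For any $z\in\Delta(z_0,r)$ the polydisc $\Delta(z,r)$ lies in $\Delta(z_0,2r)$, so Cauchy's inequalities give
\[\left|\frac{F^{(\alpha)}(z)}{\alpha!}\right|\le \frac{M}{r^{|\alpha|}}\qquad (z\in\Delta(z_0,r),\ \alpha\in\mathbb{N}^n).\]
Applying the defining property of $\ell_1^{(n)}$ with the particular choice $\rho=1/r$ yields $\sum_{\alpha}|\xi_\alpha|r^{-|\alpha|}<+\infty$, and together these two bounds provide the uniform majorant $\sum_\alpha|\xi_\alpha F^{(\alpha)}(z)/\alpha!|\le M\sum_\alpha|\xi_\alpha|r^{-|\alpha|}$ on $\Delta(z_0,r)$. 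The Weierstrass $M$-test then gives uniform convergence of $S_N$ on $\Delta(z_0,r)$, and since $z_0\in D$ was arbitrary, Weierstrass's theorem yields holomorphy of $(\xi\cdot F)$ on all of $D$.

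This is genuinely a routine verification; there is no serious obstacle. The only point of care is coordinating the polydisc radius $r$ used in Cauchy's inequalities with the parameter $\rho$ appearing in the summability condition that defines $\ell_1^{(n)}$. Because membership in $\ell_1^{(n)}$ requires the summability $\sum_\alpha|\xi_\alpha|\rho^{|\alpha|}<+\infty$ for \emph{every} $\rho>0$, the choice $\rho=1/r$ is always legitimate, and no further technical machinery is required.
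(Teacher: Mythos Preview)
Your argument is correct: Cauchy's inequalities on a slightly smaller polydisc, combined with the summability condition defining $\ell_1^{(n)}$ at $\rho=1/r$, give a uniform majorant that makes the Weierstrass $M$-test and convergence theorem apply. The paper does not actually prove this lemma---it is simply recalled from \cite{BG1}---so there is no in-text proof to compare against, but your approach is precisely the standard one and almost certainly coincides with the original.
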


    Lemma \ref{lem-bounded} indicates that any functional in $\ell_1^{(n)}$ forms a bounded operator on the Hilbert space $A^2(D,e^{-\psi})$.

    \begin{Lemma}[\cite{BG2}]\label{lem-bounded}
        Let $D$ be a domain in $\mathbb{C}^n$, $\xi\in\ell_1^{(n)}$, and $\psi$ a plurisubharmonic function on $D$. Then for any compact subset $K$ of $D$, there exists a positive constant $C_K$, such that
        \[|(\xi\cdot f)(z)|^2\leq \int_{D}|f|^2e^{-\psi},\]
        for any $z\in D$ and $f\in A^2(D,e^{-\psi}).$
    \end{Lemma}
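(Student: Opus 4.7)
The plan is to control $(\xi\cdot f)(z)$ via pointwise estimates on the Taylor coefficients $f^{(\alpha)}(z)/\alpha!$ at $z\in K$, and then sum these against $\xi=(\xi_\alpha)$ using the defining growth property of $\ell_1^{(n)}$. I interpret the displayed inequality as the statement $|(\xi\cdot f)(z)|^2\le C_K\int_D|f|^2 e^{-\psi}$ valid for $z\in K$, since both the constant $C_K$ and the restriction on $z$ appear to have been dropped by a typo.

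First I would exploit compactness: choose $r>0$ small enough that the closed polydisc $\overline{P(z,r)}\subset D$ for every $z\in K$, equivalently so that the closed $r$-neighbourhood $K_r$ is compactly contained in $D$. Since the plurisubharmonic function $\psi$ is upper semicontinuous, it is bounded above on the compact set $K_r$ by some constant $M$, hence $e^{-\psi}\ge e^{-M}$ throughout $K_r$. Applying the sub-mean value inequality to the plurisubharmonic function $|f|^2$ over the polydisc $P(\zeta,r/2)\subset K_r$ then yields, uniformly in $\zeta\in K_{r/2}$,
\[|f(\zeta)|^2\le\frac{1}{\bigl(\pi(r/2)^2\bigr)^n}\int_{P(\zeta,r/2)}|f|^2\le\frac{e^M}{\bigl(\pi(r/2)^2\bigr)^n}\int_D|f|^2 e^{-\psi}=:C'\int_D|f|^2 e^{-\psi}.\]

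Next, for $z\in K$ and $\alpha\in\mathbb{N}^n$, Cauchy's integral formula on the distinguished boundary of $P(z,r/2)\subset K_{r/2}$ gives the standard Cauchy estimate
\[\left|\frac{f^{(\alpha)}(z)}{\alpha!}\right|\le\frac{\sup_{\partial_0 P(z,r/2)}|f|}{(r/2)^{|\alpha|}}\le\frac{\sqrt{C'}}{(r/2)^{|\alpha|}}\left(\int_D|f|^2 e^{-\psi}\right)^{1/2}.\]
Summing against $|\xi_\alpha|$ and invoking the definition of $\ell_1^{(n)}$ with the specific choice $\rho=(r/2)^{-1}$ gives
\[|(\xi\cdot f)(z)|\le\sum_{\alpha\in\mathbb{N}^n}|\xi_\alpha|\left|\frac{f^{(\alpha)}(z)}{\alpha!}\right|\le\sqrt{C'}\Bigl(\sum_\alpha|\xi_\alpha|(r/2)^{-|\alpha|}\Bigr)\left(\int_D|f|^2 e^{-\psi}\right)^{1/2},\]
and squaring yields the lemma with $C_K=C'\bigl(\sum_\alpha|\xi_\alpha|(r/2)^{-|\alpha|}\bigr)^2<+\infty$, a constant depending only on $K$, $\psi$, and $\xi$.

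There is no substantive obstacle: the only care needed is to fix the polydisc radius $r$ uniformly over $z\in K$ so that $\overline{P(z,r)}\subset D$, and to rely on upper semicontinuity rather than continuity to bound $\psi$ above on $K_r$. The finiteness of $\sum_\alpha|\xi_\alpha|(r/2)^{-|\alpha|}$ is exactly the defining property of $\ell_1^{(n)}$, so the argument uses nothing beyond Cauchy's estimate, the sub-mean value inequality for plurisubharmonic functions, and this defining property.
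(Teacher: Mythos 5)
Your proof is correct, and your reading of the statement (the constant $C_K$ and the restriction $z\in K$ were dropped by a typo) is the intended one. The paper only cites this lemma from \cite{BG2} without reproducing the argument, and the standard proof is exactly the one you give: a uniform polydisc radius over $K$, an upper bound for $\psi$ on a compact neighborhood, the sub-mean value inequality plus Cauchy estimates, and summation against $|\xi_\alpha|$ using the defining summability of $\ell_1^{(n)}$ with $\rho=(r/2)^{-1}$.
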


    The following Lemma gives the existence of the holomorphic function achieving the infimum in the definition of $\xi-$Bergman kernel.
    
    \begin{Lemma}[\cite{BG2}]\label{lem-F0}
        Let $D$ be a domain in $\mathbb{C}^n$, $z\in D$, and $\psi$ a plurisubharmonic function on $D$ with $A^2(D,e^{-\psi})\neq\{0\}$. Then for any $\xi\in \ell_1^{(n)}$, there exists a holomorphic function $F_0$ on $D$ such that
        \[K^{\psi}_{\xi,D}(z)=\frac{|(\xi\cdot F_0)(z)|^2}{\int_D |F_0|^2e^{-\psi}}.\]
    \end{Lemma}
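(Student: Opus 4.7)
My plan is to view $K^\psi_{\xi, D}(z)$ as the squared norm of the Riesz representer of a bounded linear functional on a Hilbert space. The two inputs Lemmas \ref{lem-holo-wrt-z} and \ref{lem-bounded} are tailor-made for exactly this setup, so the proof should reduce to a clean application of the Riesz representation theorem followed by Cauchy--Schwarz.

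Concretely, I would first equip $A^2(D, e^{-\psi})$ with the inner product $\langle f, g \rangle_\psi \coloneqq \int_D f \overline{g} e^{-\psi}$ and check that this is a Hilbert space: completeness follows from the standard sub-mean value inequality for $|f|^2 e^{-\psi}$ on small balls where $\psi$ is locally bounded above, which shows that any $L^2$-Cauchy sequence converges uniformly on compact subsets of $D$ to a holomorphic limit, and Fatou's lemma places this limit back in $A^2(D, e^{-\psi})$. Next, fixing $z \in D$, I would consider the evaluation functional $T_\xi \colon A^2(D, e^{-\psi}) \to \mathbb{C}$ defined by $T_\xi(f) \coloneqq (\xi \cdot f)(z)$. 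By Lemma \ref{lem-holo-wrt-z} the defining sum $\sum_\alpha \xi_\alpha a_\alpha$ converges, and Lemma \ref{lem-bounded} precisely says that $T_\xi$ is a bounded linear functional on the Hilbert space $A^2(D, e^{-\psi})$.

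Assuming first that $T_\xi \not\equiv 0$, the Riesz representation theorem furnishes a unique nonzero $F_0 \in A^2(D, e^{-\psi})$ satisfying $(\xi \cdot f)(z) = \langle f, F_0 \rangle_\psi$ for every $f \in A^2(D, e^{-\psi})$. Cauchy--Schwarz then yields
\[|(\xi \cdot f)(z)|^2 = |\langle f, F_0 \rangle_\psi|^2 \le \|F_0\|_\psi^2 \cdot \|f\|_\psi^2,\]
with equality precisely when $f$ is a scalar multiple of $F_0$. Hence the supremum in the definition of $K^\psi_{\xi, D}(z)$ is attained at $F_0$ and equals $\|F_0\|_\psi^2 = |(\xi \cdot F_0)(z)|^2 / \int_D |F_0|^2 e^{-\psi}$, which is the claimed identity.

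The only point requiring separate bookkeeping -- and the nearest thing here to an obstacle, though it is entirely minor -- is the degenerate case $T_\xi \equiv 0$. In that case $K^\psi_{\xi, D}(z) = 0$ by definition, but the Riesz representer is $0$, making the quotient on the right-hand side indeterminate. I would simply take $F_0$ to be any nonzero element of $A^2(D, e^{-\psi})$, which exists by hypothesis; then $(\xi \cdot F_0)(z) = T_\xi(F_0) = 0$ and the identity reduces to $0 = 0 / \|F_0\|_\psi^2$. With this trivial case handled, the proof is complete.
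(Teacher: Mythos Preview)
Your argument is correct: the functional $f\mapsto(\xi\cdot f)(z)$ is bounded on the Hilbert space $A^2(D,e^{-\psi})$ by Lemma~\ref{lem-bounded}, so the Riesz representer (or any nonzero element in the degenerate case) realizes the supremum. The paper does not supply its own proof of this lemma---it is quoted from \cite{BG2}---and the Riesz/Cauchy--Schwarz route you take is the standard one, so there is nothing further to compare.
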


    The following is a basic log-plurisubharmonicity of $\xi-$Bergman kernels.

    \begin{Lemma}[\cite{BG2}]\label{lem-log-psh-wrt-z}
        Let $D$ be a domain in $\mathbb{C}^n$, $\xi\in \ell_1^{(n)}$, and $\psi$ a plurisubharmonic function on $D$. Then $\log K^{\psi}_{\xi,D}(z)$ is plurisubharmonic on $D$.
    \end{Lemma}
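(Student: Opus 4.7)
The plan is to prove both upper semicontinuity and the sub-mean value inequality of $\log K^{\psi}_{\xi,D}$ directly, rather than rely on the general theory of sup-of-psh-functions which only produces plurisubharmonicity up to an upper semicontinuous regularization. The underlying idea is that the sup-formula for $K^{\psi}_{\xi,D}$ exhibits it as an upper envelope of squared moduli of holomorphic functions, so the work amounts to showing that the envelope has sufficient regularity to be treated directly.

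First I would handle the sub-mean value step. Fix $z_0 \in D$; if $K^{\psi}_{\xi,D}(z_0) = 0$ the inequality is vacuous, so assume $K^{\psi}_{\xi,D}(z_0) > 0$ and invoke Lemma \ref{lem-F0} to choose an extremal $F_0 \in A^2(D,e^{-\psi})$ with
\[
\log K^{\psi}_{\xi,D}(z_0) = 2\log|(\xi\cdot F_0)(z_0)| - \log\int_D|F_0|^2e^{-\psi}.
\]
The definition of $K^{\psi}_{\xi,D}$ forces the same right-hand side to be a pointwise lower bound for $\log K^{\psi}_{\xi,D}(z)$ at every $z \in D$, with equality at $z_0$. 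Since $\xi\cdot F_0$ is holomorphic on $D$ by Lemma \ref{lem-holo-wrt-z}, this lower bound is plurisubharmonic in $z$. Averaging it over any small circle in any complex direction through $z_0$ and using the equality at $z_0$ yields the sub-mean inequality for $\log K^{\psi}_{\xi,D}$.

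Next I would establish upper semicontinuity by deducing continuity of $K^{\psi}_{\xi,D}$. The map $\ell_z : A^2(D,e^{-\psi}) \to \mathbb{C}$, $\ell_z(f) = (\xi\cdot f)(z)$, is linear and, by Lemma \ref{lem-bounded}, bounded with operator norm squared equal to $K^{\psi}_{\xi,D}(z)$. The same lemma shows that the family of holomorphic functions $\{\xi\cdot f : \|f\|_{A^2(D,e^{-\psi})} = 1\}$ is locally uniformly bounded on $D$. Holomorphicity (Lemma \ref{lem-holo-wrt-z}) together with Cauchy estimates then makes this family equicontinuous on every compact subset of $D$, so $\|\ell_z - \ell_{z'}\| \to 0$ as $z' \to z$. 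Hence $K^{\psi}_{\xi,D}$ is continuous on $D$, and $\log K^{\psi}_{\xi,D}$ is upper semicontinuous with values in $[-\infty,\infty)$.

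Combining the two steps, $\log K^{\psi}_{\xi,D}$ is upper semicontinuous and satisfies the sub-mean value inequality along every affine disc, so it is plurisubharmonic on $D$. The main potential obstacle is precisely the upper semicontinuity: the sup-formula directly yields plurisubharmonicity only for the usc-regularization, and identifying the sup with its regularization is not automatic. The normal-family argument built on Lemma \ref{lem-bounded} circumvents this by giving continuity of the kernel outright, so no regularization subtleties arise.
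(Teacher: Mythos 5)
Your argument is correct, and it is worth noting that the paper itself does not prove this lemma --- it is quoted from \cite{BG2} --- so the natural comparison is with the analogous in-house argument the paper gives for Theorem \ref{log-psh-wrt-xi}. Your sub-mean value step is exactly that method specialized to a fixed fiber and fixed $\xi$: take the extremal $F_0$ from Lemma \ref{lem-F0}, observe that $2\log|(\xi\cdot F_0)(z)|-\log\int_D|F_0|^2e^{-\psi}$ is a plurisubharmonic minorant of $\log K^{\psi}_{\xi,D}$ touching it at $z_0$ (holomorphicity of $\xi\cdot F_0$ being Lemma \ref{lem-holo-wrt-z}), and average; the paper's proof of the main theorem uses the same envelope-from-below idea, with the optimal $L^2$ extension supplying the competitor in the $w$-directions, which you do not need here. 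Where you genuinely diverge is the upper semicontinuity step: the paper handles the corresponding step for the main theorem by extremal functions, Montel's theorem, Lemma \ref{lem-conti} and Fatou's lemma, obtaining only upper semicontinuity, whereas you exploit the functional-analytic viewpoint $K^{\psi}_{\xi,D}(z)=\|\ell_z\|^2$ with $\ell_z(f)=(\xi\cdot f)(z)$, and use Lemma \ref{lem-bounded} (whose displayed inequality should of course carry the constant $C_K$) plus Cauchy estimates to get $\|\ell_z-\ell_{z'}\|\to 0$, hence genuine continuity of $K^{\psi}_{\xi,D}$. This buys a slightly stronger conclusion (continuity of the kernel, with no regularization subtleties) at the cost of being tied to the fixed-fiber setting; the Montel--Fatou route is what generalizes to varying fibers and varying $\xi(w)$ in Theorem \ref{log-psh-wrt-xi}. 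The only cosmetic point to add is the degenerate case $A^2(D,e^{-\psi})=\{0\}$, where $K^{\psi}_{\xi,D}\equiv 0$ and $\log K^{\psi}_{\xi,D}\equiv-\infty$ is accepted by convention.
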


    We prove some additional lemmas. In Lemma \ref{lem-conti} and Lemma \ref{lem-hol}:
    
    we let $\Omega$ be a domain in $\mathbb{C}^{n+m}$. Denote by $p,q$ the natural projections from $\mathbb{C}^{n+m}$ to $\mathbb{C}^m$ and $\mathbb{C}^n$ respectively. Set $D\coloneqq p(\Omega)$, and $\Omega_w\coloneqq p^{-1}(w)\cap\Omega$ for $w\in D$. Let $\xi(\cdot)$ be a functional-valued function on $D$, such that $\xi(w)\in\ell_1^{(n)}$ for every $w\in D$.
    
    \begin{Lemma}\label{lem-conti}
        Suppose the functional-valued function $\xi(\cdot)$ satisfies the locally uniformly bounded property near some $w_0\in D$, and $\xi_{\alpha}(\cdot)$ is continuous at $w_0$ for every $\alpha\in\mathbb{N}^n$. Let $(z_0,w_0)\in \Omega$, and $V\subset\subset q(\Omega_{w_0})$ an open subset such that $z_0\in V$. Let $\{(z_j,w_j)\}\subset \Omega$ be a sequence of points such that $z_j\in V$ for each $j\in\mathbb{Z}_+$, and $(z_j,w_j)\to (z_0,w_0)$ as $j\to +\infty$. If $\{f_j\}_{j\in\mathbb{Z}_+}$ is a sequence of holomorphic functions on $q(\Omega_{w_j})$ respectively satisfying that $f_j$ compactly converges to $f\in \mathcal{O}(V)$ on $V$, then
        \[\lim_{j\to +\infty} (\xi(w_j)\cdot f_j)(z_j)=(\xi(w_0)\cdot f)(z_0). \]
    \end{Lemma}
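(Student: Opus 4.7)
The plan is to extract a $j$-independent summable majorant for the series defining $(\xi(w_j)\cdot f_j)(z_j)$, reducing the problem to the convergence of a finite partial sum where termwise convergence follows from the continuity and compact-convergence hypotheses.

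First, I would choose a radius $\rho_0>0$ small enough that the closed polydisc $\overline{\Delta^n(z_0,2\rho_0)}$ of polyradius $(2\rho_0,\dots,2\rho_0)$ centred at $z_0$ is contained in $V$. Since $V\subset\subset q(\Omega_{w_0})$ and $\Omega$ is open, for $j$ large this polydisc also lies in $q(\Omega_{w_j})$, so $f_j$ is defined there; by the compact convergence of $f_j$ to $f$ on the compact set $\overline{\Delta^n(z_0,2\rho_0)}$, there is a uniform constant $M$ with $|f_j|\le M$ and $|f|\le M$ on this polydisc for all large $j$. For such $j$ we have $z_j\in\Delta^n(z_0,\rho_0)$, hence $\overline{\Delta^n(z_j,\rho_0)}\subset\overline{\Delta^n(z_0,2\rho_0)}$, and the Cauchy estimate yields
\[\left|\frac{f_j^{(\alpha)}(z_j)}{\alpha!}\right|\le\frac{M}{\rho_0^{|\alpha|}},\]
with the analogous bound for $f^{(\alpha)}(z_0)/\alpha!$.

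Next, I would invoke the locally uniformly bounded property of $\xi(\cdot)$ at $w_0$ with the inflated parameter $\rho=2/\rho_0$: there exist a neighbourhood $W$ of $w_0$ and a constant $C>0$ such that $\sum_\alpha|\xi_\alpha(w)|(2/\rho_0)^{|\alpha|}\le C$ for every $w\in W$, and in particular $|\xi_\alpha(w)|\le C(\rho_0/2)^{|\alpha|}$ for each $\alpha$ and each $w\in W$. For $j$ large with $w_j\in W$, combining this with the Cauchy bound yields the $j$-uniform estimate
\[\left|\xi_\alpha(w_j)\,\frac{f_j^{(\alpha)}(z_j)}{\alpha!}\right|\le CM\cdot 2^{-|\alpha|},\]
with the same majorant bounding the limiting series at $(w_0,z_0,f)$. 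Since $\#\{\alpha\in\mathbb{N}^n:|\alpha|=k\}$ grows only polynomially in $k$, the series $\sum_\alpha 2^{-|\alpha|}$ converges, providing the desired summable majorant.

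Given $\varepsilon>0$, I would then choose $N$ with $\sum_{|\alpha|>N}CM\cdot 2^{-|\alpha|}<\varepsilon/3$, which controls both tails uniformly in $j$ and reduces the problem to termwise convergence of the finite partial sum $\sum_{|\alpha|\le N}$. For each fixed $\alpha$, $\xi_\alpha(w_j)\to\xi_\alpha(w_0)$ by the continuity assumption, while $f_j^{(\alpha)}(z_j)\to f^{(\alpha)}(z_0)$ follows because compact convergence of holomorphic functions propagates to every derivative (so $f_j^{(\alpha)}\to f^{(\alpha)}$ uniformly on $\overline{\Delta^n(z_0,\rho_0)}$), together with $z_j\to z_0$ and the continuity of $f^{(\alpha)}$. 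The only subtle point—and the piece that forces care—is the choice $\rho=2/\rho_0$ rather than $\rho=1/\rho_0$ when invoking the locally uniformly bounded property: the slack factor of $2$ is exactly what converts absolute summability at radius $2/\rho_0$ into the geometric majorant $CM\cdot 2^{-|\alpha|}$, which in turn dominates the polynomial growth of the number of multi-indices of a given length.
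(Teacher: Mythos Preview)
Your proof is correct and follows essentially the same strategy as the paper: bound the Taylor coefficients uniformly via Cauchy's estimates on a fixed polydisc in $V$, invoke the locally uniformly bounded property at a radius strictly larger than the reciprocal of the Cauchy radius to obtain a $j$-independent summable majorant, then split into a uniformly small tail and a finite partial sum where termwise convergence follows from compact convergence of derivatives and continuity of $\xi_\alpha$. The only cosmetic difference is that the paper packages the tail estimate as $(\rho R)^{-k}\sup_j\sum_{|\alpha|>k}|\xi_\alpha(w_j)|\rho^{|\alpha|}$ rather than your explicit geometric majorant $CM\cdot 2^{-|\alpha|}$, but the underlying idea---the slack between $\rho$ and $1/R$ (your factor of $2$)---is identical.
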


    \begin{proof}
        Let $U\subset\subset q(\Omega_{w_0})$, and we may assume $U\subset\subset q(\Omega_{w_j})$ for every $j\ge 0$. Since $f_j$ compactly converges to $f$ on $V$, by Cauchy's inequality, there exist some $M>0$ and $R>0$, such that
        \[\frac{|f_j^{(\alpha)}(z)|}{\alpha!}\leq \frac{M}{R^{|\alpha|}},\ \forall z\in U, j\ge 0, \alpha\in\mathbb{N}^n.\]
        As $\xi(\cdot)$ is locally uniformly bounded near $w_0$, we have
        \[\sup_{j\ge 0}\sum_{\alpha\in\mathbb{N}^n}|\xi_{\alpha}(w_j)|\rho^{|\alpha|}<+\infty\]
        for some fixed $\rho>1/R$. Then for any given $\epsilon>0$, there exists $k\in\mathbb{N}$ such that 
        \begin{equation}\label{sum|alpha|>k}
            \begin{split}
                \sup_{j\ge 0}\sum_{|\alpha|>k}|\xi_{\alpha}(w_j)|\frac{|f_j^{(\alpha)}(z_j)|}{\alpha!}&\leq \sup_{j\ge 0}\sum_{|\alpha|>k}|\xi_{\alpha}(w_j)|\frac{M}{R^{|\alpha|}}\\
                &\leq \frac{1}{(\rho R)^k}\sup_{j\ge 0}\sum_{|\alpha|>k}|\xi_{\alpha}(w_j)|\rho^{|\alpha|}<\epsilon.
            \end{split}
        \end{equation}
    It follows $f_j$ compactly converges to $f$ that 
    \[\lim_{j\to +\infty}f^{(\alpha)}_j(z_j)= f^{(\alpha)}(z_0), \ \forall \alpha\in\mathbb{N}^n.\]
    Combining with the continuities of $\xi_{\alpha}(\cdot)$'s at $w_0$, we get
    \[\lim_{j\to +\infty}\sum_{|\alpha|\leq k}\xi_{\alpha}(w_j)\frac{f_j^{(\alpha)}(z_j)}{\alpha!}=\sum_{|\alpha|\leq k}\xi_{\alpha}(w_0)\frac{f^{(\alpha)}(z_0)}{\alpha!}.\]
    Now according to (\ref{sum|alpha|>k}) we have
    \begin{equation*}
        \begin{split}
            \lim_{j\to +\infty} (\xi(w_j)\cdot f_j)(z_j)&=\lim_{j\to +\infty}\sum_{\alpha\in\mathbb{N}^n}\xi_{\alpha}(w_j)\frac{f_j^{(\alpha)}(z_j)}{\alpha!}\\
            &=\sum_{\alpha\in\mathbb{N}^n}\xi_{\alpha}(w_0)\frac{f^{(\alpha)}(z_0)}{\alpha!}=(\xi(w_0)\cdot f)(z_0).
        \end{split}
    \end{equation*}
    \end{proof}

        The following lemma can be seen as a fiberwise version of Lemma \ref{lem-holo-wrt-z}.

    \begin{Lemma}\label{lem-hol}
        Suppose the functional-valued function $\xi(\cdot)$ is holomorphic and satisfies the locally uniformly bounded properties near $w_0\in D$. Let $z_0\in \Omega_{w_0}$. Then for any holomorphic function $F(z,w)$ near $(z_0,w_0)$, $(\xi(w)\cdot F_w)(z)$ is holomorphic with respect to $(z,w)$ near $(z_0,w_0)$, where $F_w(\cdot)\coloneqq F(\cdot, w)=F|_{\Omega_w}$.
    \end{Lemma}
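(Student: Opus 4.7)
The plan is to expand $(\xi(w)\cdot F_w)(z)$ as the series $\sum_{\alpha\in\mathbb{N}^n}\xi_{\alpha}(w)\,\frac{\partial^{\alpha}F}{\partial z^{\alpha}}(z,w)/\alpha!$ and show that it converges locally uniformly on a polydisc neighborhood of $(z_0,w_0)$; the Weierstrass theorem on uniform limits of holomorphic functions then yields the claim. Here the partial derivatives are taken with respect to the $z$-variables, so each term $\xi_{\alpha}(w)\,\frac{\partial^{\alpha}F}{\partial z^{\alpha}}(z,w)/\alpha!$ is clearly holomorphic in $(z,w)$, since $\xi_{\alpha}(\cdot)$ is holomorphic by assumption and $\partial^{\alpha}F/\partial z^{\alpha}$ is holomorphic in all variables.

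First I would choose a polydisc $U=\Delta^n(z_0,2r)\times\Delta^m(w_0,s)$ on which $F$ is holomorphic and on which the locally uniformly bounded property for $\xi(\cdot)$ holds. Shrinking to $U'=\Delta^n(z_0,r)\times\Delta^m(w_0,s)$ and applying Cauchy's inequalities slicewise in $z$, one obtains a constant $M>0$ with
\[
\sup_{(z,w)\in U'}\,\left|\frac{\partial^{\alpha}F}{\partial z^{\alpha}}(z,w)\right|\cdot\frac{1}{\alpha!}\;\leq\;\frac{M}{r^{|\alpha|}}
\]
for every $\alpha\in\mathbb{N}^n$. By the locally uniformly bounded property (with $\rho$ freely chosen, per Remark~\ref{rem-the.only.remark} the neighborhood is independent of $\rho$), pick any $\rho>1/r$ and get
\[
C\;\coloneqq\;\sup_{w\in\Delta^m(w_0,s)}\sum_{\alpha\in\mathbb{N}^n}|\xi_{\alpha}(w)|\,\rho^{|\alpha|}\;<\;+\infty.
\]

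Combining these two estimates gives, for every $(z,w)\in U'$ and every $\alpha$,
\[
\left|\xi_{\alpha}(w)\frac{\partial^{\alpha}F}{\partial z^{\alpha}}(z,w)\right|\cdot\frac{1}{\alpha!}\;\leq\;\frac{M\,|\xi_{\alpha}(w)|}{r^{|\alpha|}}\;=\;\frac{M}{(r\rho)^{|\alpha|}}\,|\xi_{\alpha}(w)|\,\rho^{|\alpha|}.
\]
Summing over $\alpha$ and using $r\rho>1$ produces a geometric decay factor that, together with $C$, bounds the tail uniformly on $U'$. Hence the series defining $(\xi(w)\cdot F_w)(z)$ converges normally on $U'$, and being a uniform limit of functions holomorphic in $(z,w)$ it is itself holomorphic there.

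There is no real obstacle here: the only delicate point is arranging the Cauchy bound $1/r^{|\alpha|}$ and the locally uniformly bounded estimate $\rho^{|\alpha|}$ with $r\rho>1$ so that their product gives geometric decay; this is exactly what the definition of the locally uniformly bounded property was designed for, and the argument is structurally identical to the one used in Lemma~\ref{lem-conti}, simply upgraded from pointwise convergence to uniform convergence on a polydisc.
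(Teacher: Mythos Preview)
Your argument is correct and rests on the same core estimate as the paper's proof: Cauchy's inequality gives $|\partial^{\alpha}_zF/\alpha!|\le M/r^{|\alpha|}$, and the locally uniformly bounded property with some $\rho>1/r$ then forces the tail $\sum_{|\alpha|>k}$ to decay like $(r\rho)^{-k}$ uniformly in $w$. The only difference is in how the conclusion is packaged. The paper first fixes $w$ and cites Lemma~\ref{lem-holo-wrt-z} for holomorphicity in $z$, then fixes $z$ and proves uniform convergence in $w$ alone, and finally appeals to Hartogs' theorem on separate analyticity. You instead run the estimate jointly on a polydisc in $(z,w)$ and invoke the Weierstrass convergence theorem once; this is slightly more direct and sidesteps Hartogs entirely, but the two proofs are otherwise the same computation.
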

    
\begin{proof}
    It is known that $(\xi(w)\cdot F_w)(z)$ is holomorphic with respect to $z$ for any fixed $w$ by Lemma \ref{lem-holo-wrt-z}. Now we prove that $(\xi(w)\cdot F_w)(z^*)$ is holomorphic with respect to $w$ for any fixed $z^*$ near $z_0$. Note that
    \[(\xi(w)\cdot F_w)(z^*)=\sum_{\alpha\in\mathbb{N}^n}\xi_{\alpha}(w)c_{\alpha}(w),\]
    where
    \[c_{\alpha}(w)\coloneqq \left.\frac{1}{\alpha!}\cdot\frac{\partial ^{|\alpha|} F(z,w)}{\partial z^{\alpha}}\right|_{z=z^*}\]
    is holomorphic with respect to $w$ for each $\alpha\in\mathbb{N}^n$. We also have that $\xi_{\alpha}(w)$ is holomorphic with respect to $w$ since $\xi(\cdot)$ is holomorphic. 
    
    Next we prove that the summation $\sum_{\alpha\in\mathbb{N}^n}\xi_{\alpha}(w)c_{\alpha}(w)$ is uniformly convergent near $w_0$. Let $V\subset\subset q(\Omega_{w_0})$ be an open neighborhood of $z^*$, and we can find some small open neighborhood $W$ of $w_0$ such that $V\subset\subset q(\Omega_w)$ for any $w\in W$. Then according to Cauchy's inequality we can find some $R>0$ and $M>0$ such that
    \[|c_{\alpha}(w)|\leq\frac{M}{R^{|\alpha|}}, \ \forall w\in W, \alpha\in\mathbb{N}^n.\]
    Since $\xi(\cdot)$ satisfies the locally uniformly bounded property, we can shrink $W$ if necessarily, such that
    \[\sup_{w\in W}\sum_{\alpha\in\mathbb{N}^n}|\xi_{\alpha}(w)|\rho^{|\alpha|}<+\infty\]
    for some fixed $\rho>1/R$. It follows that there exists some $M'>0$ such that
    \[|\xi_{\alpha}(w)|\leq \frac{M'}{\rho^{|\alpha|}}, \ \forall w\in W, \alpha\in\mathbb{N}^n.\]
    Thus, 
    \[|\xi_{\alpha}(w)c_{\alpha}(w)|\leq \frac{MM'}{(\rho R)^{|\alpha|}}, \ \forall w \in W, \alpha\in\mathbb{N}^n.\]
    Since $\rho R>1$, we have $\sum_{\alpha\in\mathbb{N}^n}\frac{MM'}{(\rho R)^{|\alpha|}}<+\infty$. Then we obtain that the summation $\sum_{\alpha\in\mathbb{N}^n}\xi_{\alpha}(w)c_{\alpha}(w)$ is uniformly convergent on $W$, and it follows that $(\xi(w)\cdot F_w)(z^*)=\sum_{\alpha\in\mathbb{N}^n}\xi_{\alpha}(w)c_{\alpha}(w)$ is holomorphic near $w_0$ with respect to $w$ since all $\xi_{\alpha}(w)$ and $c_{\alpha}(w)$ are holomorphic with respect to $w$.

    Now according to Hartogs' Theorem, we get that $(\xi(w)\cdot F_w)(z)$ is holomorphic with respect to $(z,w)$ near $(z_0,w_0)$.
\end{proof}

The following proposition shows that the assumptions for the functional-valued function in Lemma \ref{lem-hol} are necessary.

    \begin{Proposition}
        Let $D$ be a domain in $\mathbb{C}^m$, and $\xi(\cdot)$ a functional-valued function on $D$ such that $\xi(w)\in\ell^{(n)}_1$ for any $w\in D$. If for any bounded domain $\Omega'\subset\mathbb{C}^n$, any $z_0\in \Omega'$, and any $F\in A^2(\Omega')$, we have that $(\xi(w)\cdot F)(z_0)$ is holomorphic with respect to $w\in D$, then the functional-valued function $\xi(\cdot)$ is holomorphic and satisfies the locally uniformly bounded property on $D$. 
    \end{Proposition}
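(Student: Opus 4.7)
The plan is to extract both conclusions from the hypothesis by testing $(\xi(w)\cdot F)(z_0)$ against carefully chosen $F$'s. For the holomorphicity of each coefficient $\xi_\alpha$, fix any bounded domain $\Omega'$ containing $z_0 = 0$ and take $F(z) = z^\alpha \in A^2(\Omega')$. Then $F^{(\beta)}(0) = 0$ for $\beta \ne \alpha$ and $F^{(\alpha)}(0) = \alpha!$, so $(\xi(w)\cdot F)(0) = \xi_\alpha(w)$. The hypothesis immediately forces $\xi_\alpha(\cdot)$ to be holomorphic on $D$ for every $\alpha \in \mathbb{N}^n$, which is the first requirement.

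For the locally uniformly bounded property, fix $w_0 \in D$ and $\rho > 0$. Choose $r > 0$ with $r\rho < 1$ and set $P_r \coloneqq \{z \in \mathbb{C}^n : |z_i| < r\}$. I view $L_w : A^2(P_r) \to \mathbb{C}$, $L_w(F) \coloneqq (\xi(w)\cdot F)(0)$, as a family of continuous linear functionals on the Hilbert space $A^2(P_r)$ and apply the uniform boundedness principle. Each $L_w$ is bounded by Lemma \ref{lem-bounded} (with weight $\psi \equiv 0$ and $K = \{0\}$). By hypothesis, for every fixed $F \in A^2(P_r)$ the map $w \mapsto L_w(F)$ is holomorphic, hence continuous and therefore bounded on any compact neighborhood $K$ of $w_0$ in $D$. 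Banach--Steinhaus then yields $M \coloneqq \sup_{w \in K} \|L_w\|_{(A^2(P_r))^*} < +\infty$.

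To pass from this operator-norm bound to the required summation bound, test $L_w$ against monomials: $L_w(z^\alpha) = \xi_\alpha(w)$, whereas a direct computation in polar coordinates gives
\[
\|z^\alpha\|_{A^2(P_r)}^2 = \frac{\pi^n r^{2|\alpha|+2n}}{\prod_{i=1}^n (\alpha_i + 1)} \le \pi^n r^{2|\alpha|+2n}.
\]
Hence $|\xi_\alpha(w)| \le M\,\pi^{n/2} r^{|\alpha|+n}$ for every $\alpha \in \mathbb{N}^n$ and every $w \in K$. Summing and using $r\rho < 1$,
\[
\sup_{w \in K} \sum_{\alpha \in \mathbb{N}^n} |\xi_\alpha(w)|\,\rho^{|\alpha|}
\;\le\; M\,\pi^{n/2} r^n \sum_{\alpha \in \mathbb{N}^n} (r\rho)^{|\alpha|}
\;=\; \frac{M\,\pi^{n/2} r^n}{(1 - r\rho)^n} \;<\; +\infty,
\]
which is the locally uniformly bounded property at $w_0$.

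The main conceptual point is recognizing that Banach--Steinhaus is the correct tool. Testing against individual monomials controls each $\xi_\alpha$ separately but, on its own, does not rule out the possibility that the tails $\sum_{|\alpha| > N}|\xi_\alpha(w)|\rho^{|\alpha|}$ blow up as $w$ moves within a neighborhood of $w_0$, even though each such tail is finite for every fixed $w$. The strength of the hypothesis---that holomorphicity holds for \emph{every} $F \in A^2(\Omega')$, not just for polynomials---is exactly what feeds uniform boundedness into the argument and upgrades pointwise finiteness to a uniform control on a neighborhood of $w_0$.
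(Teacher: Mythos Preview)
Your proof is correct and follows essentially the same approach as the paper: both extract holomorphicity of $\xi_\alpha$ by testing against monomials, and both obtain the locally uniformly bounded property via the uniform boundedness principle applied to the functionals $F\mapsto(\xi(w)\cdot F)(o)$ on a Bergman space over a polydisc. The only cosmetic difference is in the final extraction step---the paper evaluates the uniformly bounded family on a single bounded set $\mathcal{U}=\{\sum a_\alpha z^\alpha: |a_\alpha|=(2R)^{-|\alpha|}\}$ and uses a phase-alignment argument, whereas you bound each $|\xi_\alpha(w)|$ individually via the operator norm applied to $z^\alpha$ and then sum; both are standard and equally valid ways to convert a uniform operator-norm bound into an $\ell^1$-type coefficient estimate.
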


    \begin{proof}
    Let $R>0$, and $\Omega'=\Delta^n_R\subset\mathbb{C}^n$ the polydisc centered at the origin $o$ with radius $R$. Let $z=(z_1,\cdots,z_n)$ be the coordinate on $\Omega'$.
    
    First, note that $z^{\alpha}\coloneqq z_1^{\alpha_1}\cdots z_n^{\alpha_n}\in A^2(\Omega')$ for any $\alpha=(\alpha_1,\ldots,\alpha_n)\in\mathbb{N}^n$. It follows the assumption for $\xi(\cdot)$ that $\xi_{\alpha}(w)=(\xi(w)\cdot z^{\alpha})(o)$ is holomorphic with respect to $w\in D$ for each $\alpha\in\mathbb{N}^n$. Then we get that $\xi(w)$ is holomorphic with respect to $w\in D$.

    Next, for any $w_0\in D$ we can find some neighborhood $V_{w_0}$ of $w_0$ in $D$, such that the family $\{\xi(w)\}_{w\in V_{w_0}}$ of linear continuous (by Lemma \ref{lem-bounded}) functionals over $A^2(\Omega')$ satisfies
    \[\sup_{w\in V_{w_0}}|(\xi(w)\cdot F)(o)|<+\infty\]
    for any fixed $F\in A^2(\Omega')$, since $(\xi(w)\cdot F)(o)$ is holomorphic on $D$. Now the uniformly bounded principle (see e.g. \cite{FA-Rudin}) implies that for any bounded subset $\mathcal{U}$ of $A^2(\Omega')$, there exists a constant $M_{\mathcal{U}}>0$, such that
    \begin{equation}\label{UBP}
        \sup_{h\in \mathcal{U}}|\xi(w)\cdot h(o)|\leq M_{\mathcal{U}}, \ \forall w\in V_{w_0}.
    \end{equation}
    We choose the bounded subset $\mathcal{U}$ of $A^2(\Omega')=A^2(\Delta^n_R)$ as follows:
    \[\mathcal{U}\coloneqq \left\{h=\sum_{\alpha\in\mathbb{N}^n}a_{\alpha}z^{\alpha}\in\mathcal{O}(\Delta^n_R) \colon |a_{\alpha}|=\frac{1}{(2R)^{|\alpha|}},\ \forall \alpha\in\mathbb{N}^n\right\}.\]
    For each $h\in\mathcal{U}$, we have
    \begin{align*}
    \begin{split}
    \int_{\Delta^n_R}|h|^2&=\sum_{\alpha\in\mathbb{N}^n}\frac{\pi^n|a_{\alpha}|^2}{(\alpha_1+1)\cdots(\alpha_n+1)}R^{2(|\alpha|+n)}\\
    &= \sum_{\alpha\in\mathbb{N}^n}\frac{\pi^nR^{2n}}{(\alpha_1+1)\cdots(\alpha_n+1)}\left(\frac{1}{2}\right)^{|\alpha|}<+\infty,
        \end{split}
    \end{align*}
    yielding that $\mathcal{U}$ is exactly a bounded subset of $A^2(\Omega')$. Now (\ref{UBP}) gives that
    \[\sup_{|a_{\alpha}|=\frac{1}{(2R)^{|\alpha|}} ( \forall\alpha\in\mathbb{N}^n)}\left|\sum_{\alpha\in\mathbb{N}^n}\xi_{\alpha}(w)a_{\alpha}\right|\leq M,\ \forall w\in V_{w_0},\]
    for some constant $M>0$ independent of $w$. It follows that
    \[\sup_{w\in V_{w_0}}\sum_{\alpha\in\mathbb{N}^n}|\xi_{\alpha}(w)|\frac{1}{(2R)^{|\alpha|}}<+\infty.\]
    Since $R$ and $w_0\in D$ are arbitrary, we conclude that $\xi(\cdot)$ satisfies the locally uniformly bounded property on $D$.
    \end{proof}

    \section{Proof of Theorem \ref{log-psh-wrt-xi}}

    For the proof of Theorem \ref{log-psh-wrt-xi}, we also need to recall the following version of the optimal $L^2$ extension theorem.

    \begin{Lemma}[Optimal $L^2$ extension theorem, see \cite{Bl13,GZ-L2ext}]\label{optimal-L2-ext}
        Let $\Omega$ be a pseudoconvex domain in $\mathbb{C}^{n+1}$ with coordinate $(z,w)$, where $z\in\mathbb{C}^n$, $w\in\mathbb{C}$, and $p$ the natural projection $p(z,w)=w$ on $\Omega$. Let $D\coloneqq p(\Omega)$ and assume that $D=\Delta_{w_0,r}$ is the disc in the complex plane centered at $w_0$ with radius $r$. For any $w\in D$, denote $\Omega_w\coloneqq p^{-1}(w)\subseteq\Omega$. Let $\psi$ be a plurisubharmonic function on $\Omega$.
        Then for every $f\in A^2(\Omega_{w_0},e^{-\psi|_{\Omega_{w_0}}})$, there exists a holomorphic function $F$ on $\Omega$, such that $F|_{\Omega_{w_0}}=f$, and
        \[\frac{1}{\pi r^2}\int_{\Omega}|F|^2e^{-\psi}\leq \int_{\Omega_{w_0}}|f|^2e^{-\psi|_{\Omega_{w_0}}}.\]
    \end{Lemma}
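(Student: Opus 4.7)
The plan is to establish the sharp $L^2$ extension via the twisted Hörmander-Demailly $\bar\partial$-method with a carefully designed weight, following the Blocki/Guan-Zhou strategy that produces the optimal constant $\pi r^2$. The overall scheme is: smoothly extend $f$ off the fiber using a cutoff in the $w$-direction, then correct the extension to a genuine holomorphic function by solving $\bar\partial u = \bar\partial\tilde f$ with a weight that simultaneously forces the vanishing of $u$ on $\Omega_{w_0}$ and yields the constant $\pi r^2$ in the final estimate.

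First I would carry out the standard reductions: exhaust $\Omega$ by smoothly bounded, relatively compact, strictly pseudoconvex subdomains, regularize $\psi$ by convolution with a mollifier to make it smooth and strictly plurisubharmonic, and assume $f$ extends holomorphically to a neighborhood of $\overline{\Omega_{w_0}}$. A weak-$L^2$ compactness argument at the end recovers the full statement while preserving the sharp constant. Because $w$ is a single complex variable, $f$ admits a local holomorphic extension $\hat f(z,w)$ to a tubular neighborhood of $\Omega_{w_0}$ in $\Omega$; multiplying by a radial cutoff $\chi(|w-w_0|^2/\epsilon^2)$ with $\chi\equiv 1$ near $0$ produces a smooth extension $\tilde f_\epsilon$ on all of $\Omega$. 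The desired extension will be $F=\tilde f_\epsilon - u$, where $u$ is the minimal $L^2$ solution of $\bar\partial u = \bar\partial\tilde f_\epsilon$ with respect to an appropriate weight and satisfies $u|_{\Omega_{w_0}}=0$.

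The crux is the weight design. Following Blocki and Guan-Zhou, I would employ a family of weights of the form $\Psi_t := \psi + \lambda_t\!\bigl(-\log(|w-w_0|^2/r^2)\bigr)$, where $\lambda_t$ is built from a one-variable auxiliary function (Ohsawa's undetermined-function technique) chosen so that (i) $e^{-\Psi_t}$ blows up fast enough at $w_0$ to enforce $u|_{\Omega_{w_0}}=0$ in the limit, and (ii) $\Psi_t$ retains enough plurisubharmonicity for a twisted Hörmander estimate (with an undetermined function $\eta$ and complement $1/\eta$ absorbing the non-psh contribution). Since $\bar\partial\tilde f_\epsilon$ is concentrated on the shell $\{|w-w_0|\approx\epsilon\}$, a Fubini computation across the disc $\Delta_{w_0,r}$ combined with a mean-value-type inequality on each slice $\Omega_w$ turns the resulting estimate into
\[\int_\Omega |F_{t,\epsilon}|^2 e^{-\psi}\;\leq\; \bigl(\pi r^2 + o(1)\bigr)\int_{\Omega_{w_0}} |f|^2 e^{-\psi|_{\Omega_{w_0}}}\]
as $\epsilon\to 0^+$ and $t\to\infty$; weak-$L^2$ compactness then extracts a holomorphic extension $F$ with the claimed sharp bound.

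The main obstacle is the precise engineering of $\lambda_t$ so that the limiting constant is \emph{exactly} $\pi r^2$ rather than a universal larger constant: this demands solving the ODE constraint that encodes the sharp tradeoff between the curvature of $\Psi_t$ (needed for Hörmander) and the rate of blow-up at $w_0$ (needed to kill $u$ along the fiber). A secondary technical issue is the commutation of the two limits $\epsilon\to 0$ and $t\to\infty$, which I would handle by coupling $\epsilon=\epsilon(t)$ and exploiting weak-$L^2$ compactness of the family $(u_t)$ together with lower semicontinuity of the $L^2$-norm with respect to $e^{-\psi}$.
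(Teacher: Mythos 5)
The paper does not prove this lemma at all: it is quoted from the literature (B\l ocki's proof of the Suita conjecture and the Guan--Zhou optimal $L^2$ extension theorem, i.e.\ the references \cite{Bl13,GZ-L2ext} attached to the statement), so the only meaningful comparison is with those proofs --- and your outline does reproduce their strategy: holomorphic extension constant in $w$ cut off near the fiber, a twisted H\"ormander/Ohsawa--Takegoshi $\bar\partial$-estimate with an undetermined auxiliary function, a weight singular along $w=w_0$ to force the correction term to vanish on the fiber, and a double limit recovered by weak $L^2$ compactness.

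As a proof, however, your proposal has a genuine gap, and it sits exactly where the theorem lives. The scheme you describe, with an unspecified twist $\eta$ and weight modification $\lambda_t\bigl(-\log(|w-w_0|^2/r^2)\bigr)$, is the classical Ohsawa--Takegoshi argument and by itself only yields an extension with some universal constant; the sharp constant $\pi r^2$ is obtained only after one actually solves the ODE system coupling $\eta$ and $\lambda_t$ (the Ohsawa--Takegoshi--B\l ocki--Guan--Zhou ``undetermined function'' computation), verifies that the resulting weight still produces a nonnegative curvature term in the twisted estimate, and computes that the $\bar\partial\tilde f_\epsilon$-term, supported on the shell $|w-w_0|\approx\epsilon$, contributes exactly $(\pi r^2+o(1))\int_{\Omega_{w_0}}|f|^2e^{-\psi}$ in the limit. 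You explicitly defer this as ``the main obstacle,'' and the sentence claiming that ``a Fubini computation combined with a mean-value-type inequality on each slice'' yields the bound is a restatement of the goal rather than an argument. Similarly, the vanishing $u|_{\Omega_{w_0}}=0$ is correct in spirit but rests on the non-integrability of $e^{-\lambda_t(-\log|w-w_0|^2/r^2)}$ along $w=w_0$, i.e.\ again on the unspecified $\lambda_t$. So the road map matches \cite{Bl13,GZ-L2ext}, but the sharp-constant construction that constitutes the theorem is missing; for the purposes of this paper it is legitimate simply to cite the lemma, as the authors do.
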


    Now we prove Theorem \ref{log-psh-wrt-xi} by using Lemma \ref{optimal-L2-ext} (called Guan-Zhou Method, see \cite{GZ-L2ext,Oh-book1,Oh-book2}).

    \begin{proof}[Proof of Theorem \ref{log-psh-wrt-xi}.]
    We firstly prove $\log K^{\psi_w}_{\xi(w),w}(z)$ is upper-semicontinuous. Let $(z_0,w_0)\in\Omega$. Let $\{(z_j,w_j)\}\subset\Omega$ be a sequence of points such that $(z_j,w_j)\rightarrow (z_0,w_0)$ as $j\to +\infty$, and
    \[K^{\psi_{w_j}}_{\xi(w_j),w_j}(z_j)\to \limsup_{\Omega\ni(z,w)\to (z_0,w_0)}K^{\psi_w}_{\xi(w),w}(z), \ \text{when }j\to+\infty.\]
    Without loss of generality, we assume $\log K^{\psi_{w_j}}_{\xi(w_j),w_j}(z_j)>-\infty$ for any $j$. According to Lemma \ref{lem-F0}, for any $j\in\mathbb{Z}_+$, there exists some $f_j\in A^2(\Omega_{w_j}, e^{-\psi_{w_j}})$ such that
    \[|(\xi(w_j)\cdot f_j)(z_j)|^2=K^{\psi_{w_j}}_{\xi(w_j),w_j}(z_j), \ \text{and }\int_{\Omega_{w_j}}|f_j|^2e^{-\psi_{w_j}}=1.\]
    Using Montel's Theorem, we can extract a subsequence of $q_*(f_{j})$ (denoted by $q_*(f_j)$ itself) which uniformly converges to some $f_0\in\mathcal{O}(q(\Omega_{w_0}))$ on any compact subset of $q(\Omega_{w_0})$. Then Lemma \ref{lem-conti} implies
    \[(\xi(w_0)\cdot f_0)(z_0)=\lim_{j\to+\infty}(\xi(w_j)\cdot f_j)(z_j)=\limsup_{\Omega\ni(z,w)\to (z_0,w_0)}K^{\psi_w}_{\xi(w),w}(z).\]
    In addition, it follows from Fatou's Lemma and the upper-semicontinuity of $\psi$ that
    \[\int_{q(\Omega_{w_0})}|f_0|^2e^{-\psi_{w_0}}\leq\liminf_{j\to+\infty}\int_{\Omega_{w_j}}|f_j|^2e^{-\psi_{w_j}}=1.\]
    Thus, we get
    \[K^{\psi_{w_0}}_{\xi(w_0),w_0}(z_0)\geq\frac{|(\xi(w_0)\cdot f_0)(z_0)|^2}{\int_{q(\Omega_{w_0})}|f_0|^2e^{-\psi_{w_0}}}\geq \limsup_{\Omega\ni(z,w)\to (z_0,w_0)}K^{\psi_w}_{\xi(w),w}(z).\]
    It means that $K^{\psi_w}_{\xi(w),w}(z)$ is upper-semicontinuous, which gives that $\log K^{\psi_w}_{\xi(w),w}(z)$ is also upper-semicontinuous on $\Omega$.

    Then we need to prove that $\log K^{\psi_w}_{\xi(w),w}(z)$ satisfies the mean value inequality on any complex line $L\subset \Omega$. If $L$ lies on $\Omega_{w}$ for some $w\in D$, Lemma \ref{lem-log-psh-wrt-z} shows it is true. Then without loss of generality, we may assume $L$ lies on some $q^{-1}(z_0)$, and $L=\Delta(w_0,r)$ is a disc for some $w_0\in D$ and $r>0$.

    According to Lemma \ref{lem-F0}, there exists some $f\in A^2(\Omega_{w_0},e^{-\psi_{w_0}})$ such that
    \begin{equation}\label{eq-f}
        K^{\psi_{w_0}}_{\xi(w_0),w_0}(z_0)=\frac{|(\xi(w_0)\cdot f)(z_0)|^2}{\int_{\Omega_{w_0}}|f|^2e^{-\psi_{w_0}}}.
    \end{equation}
    Next, using Lemma \ref{optimal-L2-ext}, we can get a holomorphic function $F$ on $p^{-1}(\Delta(w_0,r))$ such that $F|_{\Omega_{w_0}}=f$ and
    \[\frac{1}{\pi r^2}\int_{p^{-1}(\Delta(w_0,r))}|F|^2e^{-\psi}\leq \int_{\Omega_{w_0}}|f|^2e^{-\psi_{w_0}}.\]
    Now it follows from Jensen's inequality and Fubini's Theorem that
    \begin{equation}\label{eq-Jensen}
        \begin{split}
            \log\left(\int_{\Omega_{w_0}}|f|^2e^{-\psi_{w_0}}\right)&\geq \log\left(\frac{1}{\pi r^2}\int_{p^{-1}(\Delta(w_0,r))}|F|^2e^{-\psi}\right)\\
            &=\log \left(\frac{1}{\pi r^2}\int_{w\in\Delta(w_0,r)}\int_{\Omega_w}|F_w|^2e^{-\psi_w}\right)\\
            &\geq\frac{1}{\pi r^2}\int_{w\in\Delta(w_0,r)}\log\left(\int_{\Omega_w}|F_w|^2e^{-\psi_w}\right)\\
            &\geq\frac{1}{\pi r^2}\int_{w\in\Delta(w_0,r)}\Big(\log|(\xi(w)\cdot F_w)(z_0)|^2-\log K^{\psi_w}_{\xi(w),w}(z_0)\Big),
        \end{split}
    \end{equation}
    where $F_w(\cdot)\coloneqq F(\cdot,w)=F|_{\Omega_w}$ for every $w\in\Delta(w_0,r)$. Lemma \ref{lem-hol} tells us that $(\xi(w)\cdot F_w)(z_0)$ is holomorphic with respect to $w$, which implies that $\log|(\xi(w)\cdot F_w)(z_0)|^2$ is subharmonic with respect to $w$, yielding
    \begin{equation}\label{eq-subharmonic}
        \log|(\xi(w_0)\cdot F_{w_0})(z_0)|^2\leq \frac{1}{\pi r^2}\int_{w\in\Delta(w_0,r)}\log|(\xi(w)\cdot F_w)(z_0)|^2.
    \end{equation}
    Now combining (\ref{eq-f}), (\ref{eq-Jensen}) and (\ref{eq-subharmonic}), we obtain
    \[\log K^{\psi_w}_{\xi(w),w}(z_0)\leq \frac{1}{\pi r^2}\int_{w\in\Delta(w_0,r)}\log K^{\psi_w}_{\xi(w),w}(z_0),\]
    which gives the mean value inequality of $\log K^{\psi_w}_{\xi(w),w}(z)$ along $L=\Delta(w_0,r)$.

    Now we can conclude that $\log K^{\psi_w}_{\xi(w),w}(z)$ is plurisubharmonic on $\Omega$ with respect to $(z,w)$.
    \end{proof}

    \section{Proof of Corollary \ref{cor:pluripolar}}
    In this section we prove Corollary \ref{cor:pluripolar}. We start from some lemmas which will be used.

    The following well-known Krull's Lemma will allow us to reduce some questions from $\mathcal{O}_{\mathbb{C}^n,o}$ to some finite dimensional linear spaces.

    \begin{Lemma}[Krull's lemma, see \cite{AM}]\label{lem-Krull.lem}
        Let $R$ be a Noetherian local ring with the unique maximal ideal $\mathfrak{m}$. Then for any ideal $I$ of $R$,
        \[\bigcap_{k\in\mathbb{N}_+}(I+\mathfrak{m}^k)=I.\]
    \end{Lemma}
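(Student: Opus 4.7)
The plan is to reduce the identity to the classical Krull intersection theorem, $\bigcap_{k}\mathfrak{m}^{k}=0$ in a Noetherian local ring, by passing to the quotient $R/I$. The inclusion $I\subseteq\bigcap_{k}(I+\mathfrak{m}^{k})$ is immediate from $I\subseteq I+\mathfrak{m}^{k}$ for every $k$, so only the reverse containment requires work. If $I=R$ the statement is vacuous, so one may assume $I$ is proper; since $R$ is local, $I\subseteq\mathfrak{m}$, and $\bar{R}:=R/I$ is again a Noetherian local ring with maximal ideal $\bar{\mathfrak{m}}:=\mathfrak{m}/I$.

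Writing $\pi\colon R\to\bar{R}$ for the projection, one checks $\pi^{-1}(\bar{\mathfrak{m}}^{k})=I+\mathfrak{m}^{k}$ for each $k\ge 1$, so $\bigcap_{k}(I+\mathfrak{m}^{k})=\pi^{-1}\left(\bigcap_{k}\bar{\mathfrak{m}}^{k}\right)$. It therefore suffices to prove the classical statement $N:=\bigcap_{k}\bar{\mathfrak{m}}^{k}=0$ in $\bar{R}$, as this gives $\bigcap_{k}(I+\mathfrak{m}^{k})\subseteq\pi^{-1}(0)=I$, which is the desired reverse inclusion.

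To establish $N=0$, I would invoke the Artin--Rees lemma applied to the finitely generated submodule $N$ of the Noetherian ring $\bar{R}$ with respect to the ideal $\bar{\mathfrak{m}}$: there exists $c\ge 1$ such that $\bar{\mathfrak{m}}^{n}\cap N=\bar{\mathfrak{m}}^{\,n-c}(\bar{\mathfrak{m}}^{c}\cap N)$ for every $n\ge c$. By definition of $N$ one has $N\subseteq\bar{\mathfrak{m}}^{n}$ for all $n$, so the left-hand side equals $N$, giving $N=\bar{\mathfrak{m}}^{\,n-c}N\subseteq\bar{\mathfrak{m}}N$ whenever $n>c$. Since $\bar{R}$ is local, $\bar{\mathfrak{m}}$ is contained in the Jacobson radical of $\bar{R}$, and Nakayama's lemma applied to the finitely generated $\bar{R}$-module $N$ forces $N=0$. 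Lifting through $\pi$ yields $\bigcap_{k}(I+\mathfrak{m}^{k})\subseteq I$, completing the proof.

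The only nontrivial input is the Artin--Rees lemma; once that is in hand, the remainder is essentially formal bookkeeping (quotient by $I$, preimage identity, Nakayama). Since Krull's lemma is cited here as a standard result from \cite{AM}, one could alternatively quote the statement $\bigcap_{k}\bar{\mathfrak{m}}^{k}=0$ directly from the literature and dispense with the Artin--Rees/Nakayama step explicitly, reducing the proof to the short quotient argument of the first two paragraphs.
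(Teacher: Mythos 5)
Your proof is correct. The paper itself offers no proof of this lemma --- it is quoted as a standard fact from Atiyah--MacDonald --- and your argument (pass to $\bar R=R/I$, identify $\pi^{-1}(\bar{\mathfrak m}^k)=I+\mathfrak m^k$, then kill $\bigcap_k\bar{\mathfrak m}^k$ via Artin--Rees and Nakayama) is precisely the standard derivation found in that reference, so there is nothing substantive to compare. One cosmetic remark: when $I=R$ the statement is not vacuous but trivially true, since $I+\mathfrak m^k=R$ for all $k$; your handling of that case is otherwise fine.
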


    The following is a lemma from linear algebra.

    \begin{Lemma}\label{lem-linear.algebra}
        Let $p\le q$ be positive integers. Suppose that
        \[\mathbf{A}(w)=\Big(a_{ij}(w)\Big)_{\substack{1\le i\le p \\ 1\le j\le q}}\]
        is a matrix for $w\in\Delta^m$, where each $a_{ij}(w)$ is a holomorphic function in $w$. Set
        \[r(w)\coloneqq \mathrm{rank}\big(\mathbf{A}(w)\big)\in\mathbb{N}\cap [0, p], \ r\coloneqq \max_{w\in\Delta^m}r(w).\]
        Then there exists an open subset $U$ of $\Delta^m$ and a matrix
        \[\mathbf{B}(w)=\Big(b_{kl}(w)\Big)_{\substack{1\le k\le p-r\\ 1\le l\le p}}\]
        on $\Delta^m$ such that

        (1) $\Delta^m\setminus U$ is an analytic subset of $\Delta^m$;

        (2) each $b_{kl}(w)$ is a holomorphic function in $w\in \Delta^m$;
        
        (3) for every $w\in U$, the sequence
        \[\mathbb{C}^q\xrightarrow{\mathbf{A}(w)\cdot}\mathbb{C}^p\xrightarrow{\mathbf{B}(w)\cdot}\mathbb{C}^{p-r}\longrightarrow 0\]
        is exact, where $\mathbb{C}^q, \mathbb{C}^p, \mathbb{C}^{p-r}$ are seen as the spaces of column vectors.
    \end{Lemma}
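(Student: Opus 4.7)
The plan is to take $U$ to be essentially the Zariski-open locus where $\mathbf{A}(w)$ attains its maximal rank $r$, and to construct $\mathbf{B}(w)$ as a matrix whose rows are $p-r$ well-chosen global holomorphic sections of the coherent kernel sheaf $\mathcal{K} := \ker(\mathbf{A}(\cdot)^T \colon \mathcal{O}^p_{\Delta^m} \to \mathcal{O}^q_{\Delta^m})$. The guiding observation is that although the fiber dimensions of $\ker\mathbf{A}(w)^T$ jump on the determinantal locus, on the open stratum $U_0 := \{w \in \Delta^m : r(w) = r\}$ the sheaf $\mathcal{K}$ is locally free of rank $p-r$, and since the polydisc $\Delta^m$ is Stein, Cartan's Theorem A will furnish enough global sections to witness this rank on a nonempty open subset.

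First I would set $U_0$ as above and observe that $\Delta^m \setminus U_0$ is the analytic subset cut out by the $r \times r$ minors of $\mathbf{A}(w)$. Next I would form the coherent subsheaf $\mathcal{K} \subset \mathcal{O}^p_{\Delta^m}$ and note that on $U_0$, since $\mathrm{Im}\,\mathbf{A}^T$ has constant rank $r$, the short exact sequence $0 \to \mathcal{K} \to \mathcal{O}^p \to \mathrm{Im}\,\mathbf{A}^T \to 0$ splits locally, so $\mathcal{K}|_{U_0}$ is locally free of rank $p-r$ and the natural fiber map $\mathcal{K}(w) \to \mathbb{C}^p$ is injective with image $\ker\mathbf{A}(w)^T$ for $w \in U_0$. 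Fix any $w_0 \in U_0$; Cartan's Theorem A on the Stein manifold $\Delta^m$ makes the evaluation $\Gamma(\Delta^m,\mathcal{K}) \to \mathcal{K}(w_0)$ surjective, so I can select sections $b^{(1)},\ldots,b^{(p-r)} \in \Gamma(\Delta^m,\mathcal{K})$ whose values at $w_0$ form a $\mathbb{C}$-basis of $\ker\mathbf{A}(w_0)^T \subset \mathbb{C}^p$. I then define $\mathbf{B}(w)$ to be the $(p-r) \times p$ matrix whose $k$-th row is $(b^{(k)}(w))^T$: every entry is holomorphic on $\Delta^m$, and $\mathbf{B}(w)\mathbf{A}(w) \equiv 0$ by construction.

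Finally I set
\[U := U_0 \cap \{w \in \Delta^m : \mathrm{rank}\,\mathbf{B}(w) = p - r\},\]
where the second factor is the complement in $\Delta^m$ of the vanishing locus of all $(p-r) \times (p-r)$ minors of $\mathbf{B}(w)$; thus $\Delta^m \setminus U$ is analytic, yielding (1), and $U$ is nonempty because it contains $w_0$. Property (2) is already in hand. For the exactness in (3), the identity $\mathbf{B}\mathbf{A} \equiv 0$ gives $\mathrm{Im}\,\mathbf{A}(w) \subseteq \ker\mathbf{B}(w)$ for every $w$; for $w \in U$, the dimension count
\[\dim\mathrm{Im}\,\mathbf{A}(w) = r = p - (p-r) = \dim\ker\mathbf{B}(w)\]
forces equality, while $\mathrm{rank}\,\mathbf{B}(w) = p-r$ delivers surjectivity of $\mathbf{B}(w)$ onto $\mathbb{C}^{p-r}$.

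The main step to justify carefully is the existence of the global sections $b^{(1)},\ldots,b^{(p-r)}$: this combines coherence of $\mathcal{K}$ (Oka's theorem for the kernel of a map of free sheaves), local freeness of $\mathcal{K}$ at $w_0$ (from rank-constancy of $\mathbf{A}^T$ on $U_0$), and Cartan's Theorem A on the Stein polydisc together with Nakayama's lemma to pass from stalk generators to a basis of the fiber. No other obstacle is expected; the remaining verifications are routine rank calculations on determinantal subvarieties.
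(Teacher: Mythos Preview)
Your proof is correct, but it takes a genuinely different route from the paper's. The paper gives a completely elementary construction via cofactors: after permuting rows and columns so that the top-left $r\times r$ block $\mathbf{C}(w)$ of $\mathbf{A}(w)$ has nonvanishing determinant at some $w_0$, it sets $U=\{\det\mathbf{C}(w)\neq 0\}$ and, for each $j=1,\ldots,p-r$, builds a row vector $\mathbf{X}_j(w)$ out of cofactors of an explicit $(r+1)\times(r+1)$ submatrix $\mathbf{D}_j(w)$; stacking these rows yields $\mathbf{B}(w)$. The cofactor identities force $\mathbf{X}_j(w)\cdot\mathbf{A}(w)=0$ on $U$, and linear independence of the $\mathbf{X}_j$'s on $U$ follows because the $(r+j)$-th entry of $\mathbf{X}_j(w)$ equals $\det\mathbf{C}(w)\neq 0$ while the corresponding entries of the other $\mathbf{X}_i$'s vanish.

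By contrast, you invoke Oka coherence and Cartan's Theorem~A to produce global sections of the kernel sheaf $\mathcal{K}=\ker(\mathbf{A}^T)$ that frame the fiber at a chosen maximal-rank point, then cut down to the locus where those sections remain independent. This is more conceptual and would generalize with no change to an arbitrary Stein base or to morphisms of locally free sheaves. The paper's approach, on the other hand, is entirely self-contained (no sheaf theory, no Theorem~A) and furnishes explicit polynomial formulas for the entries of $\mathbf{B}(w)$ in terms of the $a_{ij}(w)$; this explicitness is exploited later in the paper's proof of Corollary~\ref{cor:pluripolar}, where the special case ``$I_w$ constant'' is handled by observing that the cofactor construction then gives $U=\Delta^m$. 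Your argument does not immediately yield that refinement, since your $U$ depends on a choice of global sections.
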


    \begin{proof}
        Since $r=\max_{w\in\Delta^m}r(w)$, there must be some $w_0\in\Delta^m$ and an $r\times r$ submatrix of $\mathbf{A}(w_0)$ with a nonzero determinant. Without loss of generality, we may assume that the submatrix
        \begin{equation*}\mathbf{C}(w)\coloneqq 
            \begin{pmatrix}
            a_{11}(w) & \cdots & a_{1r}(w) \\
            \vdots & \ddots & \vdots \\
            a_{r1}(w) & \cdots & a_{rr}(w)
        \end{pmatrix}
    \end{equation*}
    of $\mathbf{A}(w)$ satisfies $\det(\mathbf{C}(w_0))\neq 0$. Set
    \[U\coloneqq \left\{w\in\Delta^m\colon \det\big(\mathbf{C}(w)\big)\neq 0\right\}.\]
    Then $U$ is an open subset of $\Delta^m$ and $\Delta^m\setminus U$ is an analytic subset of $\Delta^m$. In the following, we construct the matrix $\mathbf{B}(w)$.

    Let $w\in \Delta^m$. For each $j\in \{1, 2, \ldots, p-r\}$, by the definition of the rank of a matrix, the $(r+1)\times (r+1)$ submatrix
    \begin{equation*}\mathbf{D}_j(w)\coloneqq 
        \begin{pmatrix}
        a_{11}(w) & \cdots & a_{1r}(w) & a_{1,r+j}(w)\\
        \vdots & \ddots & \vdots & \vdots\\
        a_{r1}(w) & \cdots & a_{rr}(w) & a_{r,r+j}(w) \\
        a_{j1}(w) & \cdots & a_{jr}(w) & a_{r+j,r+j}(w) 
    \end{pmatrix}
\end{equation*}
    of $\mathbf{A}(w)$ must satisfy $\det(\mathbf{D}_j(w))=0$. Set $d^k_{j}(w)$ to be the cofactor of the element $a_{k,r+j}(w)$ in $\mathbf{D}_j(w)$ for each $k\in\{1,\ldots,r\}\cup\{r+j\}$. Then $d^k_{j}(w)$'s are holomorphic in $w$ on $\Delta^m$, and $d_{j}^{r+j}(w)=\det(\mathbf{C}(w))\neq 0$ for every $w\in U$.
    
    It follows from the basic properties of determinants that
    \[\sum_{k=1}^{r}a_{kl}(w)d_{j}^k(w)+a_{jl}(w)d_{j}^{r+j}(w)=0, \ \forall l\in\{1,\ldots,r\}.\]
    In addition, since $\mathrm{rank}(\mathbf{A}(w))\le r$ and $\det(\mathbf{C}(w))\neq 0$ for $w\in U$, all the $(r+1)$-th to $q-$th column vectors of $\mathbf{A}(w)$ are linear combinations of the first $r$ column vectors of $\mathbf{A}(w)$ when $w\in U$, which implies that
    \[\sum_{k=1}^{r}a_{kl}(w)d_{j}^k(w)+a_{jl}(w)d_{j}^{r+j}(w)=0, \ \forall l\in\{1,\ldots,q\}, \ w\in U.\]

    For each $j\in\{1,\ldots,p-r\}$ and $w\in\Delta^m$, let
    \begin{equation*}
        \mathbf{X}_{j}(w)\coloneqq \big(d_{j}^{1}(w),\cdots,d_{j}^r(w), 0,\cdots, 0, d_{j}^{r+j}(w),0,\cdots,0\big)\in\mathbb{C}^{1\times p},
    \end{equation*}
     where $d_{j}^{r+j}(w)$ is at the $(r+j)-$th position. Denote 
    \[L_w\coloneqq \big\{\mathbf{X}\in\mathbb{C}^{1\times p}\colon \mathbf{X}\cdot \mathbf{A}(w)=\mathbf{0}\big\},\]
    where $w\in\Delta^m$. Then for $w\in U$, we have $\dim(L_w)=p-\mathrm{rank}(\mathbf{A}(w))=p-r$. Now according to the above discussions, we have that $\mathbf{X}_j(w)\in L_w$ and $\mathbf{X}_j(w)$ is holomorphic in $w$ on $\Delta^m$ for each $j$. Since $d_{j}^{r+j}(w)\neq 0$ on $U$, it can be seen that $\mathbf{X}_1(w),\ldots, \mathbf{X}_{p-r}(w)$ are linear independent for every fixed $w\in U$. Thus, $\{\mathbf{X}_1(w),\ldots, \mathbf{X}_{p-r}(w)\}$ forms a holomorphic basis of $L_w$ when $w\in U$.

     Finally, let
     \begin{equation*}\mathbf{B}(w)=
        \begin{pmatrix}
            \mathbf{X}_1(w)\\
            \vdots\\
            \mathbf{X}_{p-r}(w)
        \end{pmatrix}, \ w\in \Delta^m.
     \end{equation*}
    Then every element in the matrix $\mathbf{B}(w)$ is holomorphic in $w\in\Delta^m$, and $\mathbf{B}(w)\cdot\mathbf{A}(w)=\mathbf{O}_{(p-r)\times q}$ when $w\in U$. It follows that $\mathrm{Ker}(\mathbf{B}(w))\supseteq\mathrm{Im}(\mathbf{A}(w))$ for each $w\in U$. Also, we have $\mathrm{rank}(\mathbf{B}(w))=p-r=\dim(\mathbb{C}^p)-\dim(\mathrm{Im}\,\mathbf{A}(w))$, yielding that $\mathrm{Ker}(\mathbf{B}(w))=\mathrm{Im}(\mathbf{A}(w))$ for each $w\in U$, which shows that the sequence in the statement (3) is exact for every $w\in U$.

    The proof is done.
    \end{proof}

    Recall that $\pi$ is the projection $\Delta^{n+m}\to\Delta_w^m$ and $H_w\coloneqq \pi^{-1}(w)\cap\Delta^{n+m}$ is the fiber over each $w\in\Delta^m$. Let $\mathscr{I}$ be an ideal sheaf on $\Delta^{n+m}$ \textbf{globally} generated by some $F_1,\cdots,F_t\in\mathcal{O}(\Delta^{n+m})$. Denote $I_w\coloneqq (\mathscr{I}|_{H_{w}})_{(o,w)}$ for each $w\in\Delta^m$. In addition, for $\xi\in\ell_1^{(n)}$, denote
    \[\deg(\xi)\coloneqq \sup\big\{k\colon \exists \alpha\in\mathbb{N}^n \text{ with } |\alpha|=k \text{ and } \xi_{\alpha}\neq 0\big\}\in\mathbb{N}\cup\{+\infty\}.\]
    We prove the following lemma by Lemma \ref{lem-linear.algebra}.

    \begin{Lemma}\label{lem-construct.xi-i}
    Assume that there exists a positive integer $N$ satisfying $\mathfrak{m}^N\subseteq I_{w}$ for every $w\in\Delta^m$, where $\mathfrak{m}$ is the maximal ideal of $\mathcal{O}_{\mathbb{C}^n,o}$.

    Then there exists an open subset $U$ of $\Delta^m$ and a collection of functional-valued functions $\{\xi_1(\cdot),\ldots,\xi_s(\cdot)\}$ on $\Delta^m$ such that

    (i) $\Delta^m\setminus U$ is an analytic subset of $\Delta^m$;
    
    (ii) $\xi_i(w)\in\ell_1^{(n)}$ and $\deg(\xi_i(w))\le N-1$, for each $i\in\{1,\ldots,s\}$ and $w\in \Delta^m$;

    (iii) each $\xi_i(\cdot)$ is holomorphic on $\Delta^m$, $i\in\{1,\ldots,s\}$;

    (iv) for every $w\in U$,
    \begin{equation}\label{eq-I.zeros.of.xi}
        I_{w}=\big\{f\in\mathcal{O}_{\mathbb{C}^n,o}\colon (\xi_i(w)\cdot f)(o)=0, \ i=1,\ldots,s\big\}.
    \end{equation}
    \end{Lemma}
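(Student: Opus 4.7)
The plan is to reduce the statement to a finite-dimensional linear-algebra problem, to which Lemma~\ref{lem-linear.algebra} applies directly, and to then read the functionals $\xi_k(w)$ off the rows of the resulting cokernel matrix.

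The starting point is that the hypothesis $\mathfrak{m}^N\subseteq I_w$ forces $I_w$ to be determined by its image in the finite-dimensional quotient $V\coloneqq \mathcal{O}_{\mathbb{C}^n,o}/\mathfrak{m}^N$. We identify $V$ with $\mathbb{C}^p$ via the monomial basis $\{z^\alpha\}_{|\alpha|<N}$, where $p=\binom{N+n-1}{n}$, and write $\pi_V\colon \mathcal{O}_{\mathbb{C}^n,o}\to\mathbb{C}^p$ for the map sending $f=\sum_\alpha a_\alpha z^\alpha$ to the Taylor-coefficient vector $(a_\alpha)_{|\alpha|<N}$; then $f\in I_w$ if and only if $\pi_V(f)\in I_w/\mathfrak{m}^N$.

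The next step is to produce a holomorphic matrix whose image realizes $I_w/\mathfrak{m}^N$. Since $\mathscr{I}$ is globally generated by $F_1,\ldots,F_t\in\mathcal{O}(\Delta^{n+m})$, the stalk $I_w$ is generated in $\mathcal{O}_{\mathbb{C}^n,o}$ by the germs $F_1(\cdot,w),\ldots,F_t(\cdot,w)$; combined with $z^\beta\in\mathfrak{m}^N$ for $|\beta|\ge N$, this shows that $I_w/\mathfrak{m}^N$ is $\mathbb{C}$-spanned by the vectors $\pi_V\bigl(z^\beta F_i(\cdot,w)\bigr)$ with $1\le i\le t$ and $|\beta|<N$. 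Listing these $q\coloneqq tp$ vectors as columns gives a matrix $\mathbf{A}(w)\in\mathbb{C}^{p\times q}$ whose entries are Taylor coefficients (in $z$) of the products $z^\beta F_i(z,w)$; these depend holomorphically on $w\in\Delta^m$, and by construction $\mathrm{Im}\,\mathbf{A}(w)=I_w/\mathfrak{m}^N$ for every $w$.

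Finally, we apply Lemma~\ref{lem-linear.algebra} with $r\coloneqq \max_{w\in\Delta^m}\mathrm{rank}\,\mathbf{A}(w)$ to obtain an open subset $U\subseteq\Delta^m$ with analytic complement and a holomorphic matrix $\mathbf{B}(w)=(b_{kl}(w))_{1\le k\le s,\,1\le l\le p}$, with $s\coloneqq p-r$, satisfying $\ker\mathbf{B}(w)=\mathrm{Im}\,\mathbf{A}(w)$ for every $w\in U$. Each row of $\mathbf{B}(w)$ is then converted into a functional $\xi_k(w)\in\ell_1^{(n)}$ by setting $\xi_{k,\alpha}(w)\coloneqq b_{k,\iota(\alpha)}(w)$ for $|\alpha|<N$, where $\iota$ is a fixed bijection between $\{\alpha\in\mathbb{N}^n:|\alpha|<N\}$ and $\{1,\ldots,p\}$, and $\xi_{k,\alpha}(w)\coloneqq 0$ otherwise; with this definition, $(\xi_k(w)\cdot f)(o)$ is exactly the $k$-th component of $\mathbf{B}(w)\cdot\pi_V(f)$. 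This immediately yields $\deg\xi_k(w)\le N-1$ and holomorphicity of each $\xi_k(\cdot)$ (since the $b_{k\ell}$ are holomorphic), and the locally uniformly bounded property is automatic by Remark~\ref{rem-the.only.remark}. For $w\in U$ the chain of equivalences
\[
f\in I_w\iff \pi_V(f)\in \ker\mathbf{B}(w)\iff (\xi_k(w)\cdot f)(o)=0 \text{ for all } k
\]
then delivers property (iv). The only slightly delicate point is the bookkeeping in the translation between rows of $\mathbf{B}(w)$ and elements of $\ell_1^{(n)}$; once that correspondence is fixed, all four asserted properties fall out directly from Lemma~\ref{lem-linear.algebra}.
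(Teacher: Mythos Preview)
Your proposal is correct and follows essentially the same route as the paper: reduce to the finite-dimensional quotient $\mathcal{O}_{\mathbb{C}^n,o}/\mathfrak{m}^N$, build the holomorphic matrix $\mathbf{A}(w)$ from the images of $z^\beta F_i(\cdot,w)$, apply Lemma~\ref{lem-linear.algebra}, and read the $\xi_k(w)$ off the rows of $\mathbf{B}(w)$. The only cosmetic differences are notational, and your extra remark on the locally uniformly bounded property (via Remark~\ref{rem-the.only.remark}) is a harmless bonus not needed for the lemma as stated.
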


    \begin{proof}
    Denote by $\Pi$ the quotient map $\mathcal{O}_{\mathbb{C}^n,o}\to \mathcal{O}_{\mathbb{C}^n,o}/\mathfrak{m}^N$. Note that $\mathcal{V}\coloneqq \mathcal{O}_{\mathbb{C}^n,o}/\mathfrak{m}^N$ can be seen as a finite dimensional linear space with the basis
    \[\mathcal{B}\coloneqq \{z^{\alpha}\colon \alpha\in\mathbb{N}^n, \ |\alpha|\le N-1\},\]
    where $z$ is the coordinate on $\mathbb{C}^n$. For each $w\in\Delta^m$, let $\mathcal{L}_w\coloneqq \Pi(I_w)$, which is a linear subspace of $\mathcal{V}$. By the assumption of $\mathscr{I}$, each ideal $I_w$ is generated by $f_{i,w}$, where $f_{i,w}\coloneqq (F_i|_{H_{w}},o_z)\in\mathcal{O}_{\mathbb{C}^n,o}$ for $i=1,\ldots,t$. Then we can see that as a linear space over $\mathbb{C}$, $\mathcal{L}_w$ is spanned by
    \[\mathcal{S}_w\coloneqq \Big\{\Pi(z^{\alpha}f_{i,w})\colon \alpha\in\mathbb{N}^n, \ |\alpha|\le N-1, \ i=1,\ldots,t\Big\}.\]
    
    We denote the coefficient matrix of $\mathcal{S}_w$ under the basis $\mathcal{B}$ by
    \[\mathbf{A}(w)=\Big(a_{\alpha j}(w)\Big)_{\substack{1\le |\alpha|\le N-1 \\ 1\le j\le q}},\]
    where $q\coloneqq t\cdot\binom{n}{N+n-1}$ denotes the number of elements in $\mathcal{S}_w$. Then Lemma \ref{lem-linear.algebra} gives the corresponding open subset $U$ of $\Delta^m$ and the matrix
    \[\mathbf{B}(w)=\Big(b_{k\alpha}(w)\Big)_{\substack{1\le k\le s\\ 1\le |\alpha|\le N-1}}\]
    on $\Delta^m$ such that the three conditions in Lemma \ref{lem-linear.algebra} hold. Set $\xi_{k}(\cdot)=\big(\xi_k(w)_{\alpha}\big)_{\alpha\in\mathbb{N}^n}$, $k=1,\ldots,s$, where
    \begin{equation*}\big(\xi_{k}(w)\big)_{\alpha}=\left\{
        \begin{matrix}
            b_{k\alpha}(w) & |\alpha|\le N-1\\
            0 & |\alpha|\ge N
        \end{matrix}\right..
    \end{equation*}
    Then the four statements can be checked to be true by the results of Lemma \ref{lem-linear.algebra}. The statements (i)(ii)(iii) are clear. 
    
    For the last statement, now let $w\in U$. By the construction, for every $f\in I_w$, since $\mathbf{B}(w)\cdot\mathbf{A}(w)=\mathbf{O}$ we can see $\big(\xi_i(w)\cdot\Pi(f)\big)(o)=0$, which implies $(\xi_i(w)\cdot f)(o)=0$, for each $i\in\{1,\ldots,s\}$. Conversely, if $f\in\mathcal{O}_{\mathbb{C}^n,o}$ satisfies $\big(\xi_i(w)\cdot f\big)(o)=0$ for $i=1,\ldots,s$, then the column vector of the coefficients of $\Pi(f)$ under the basis $\mathcal{B}$ is in $\mathrm{Ker}(\mathbf{B}(w))$. As $\mathrm{Ker}(\mathbf{B}(w))=\mathrm{Im}(\mathbf{A}(w))$, it follows that $\Pi(f)\in\mathrm{Im}(\mathbf{A}(w))$, yielding $\Pi(f)\in I_w/\mathfrak{m}^N$. Thus, $f\in I_w$ as $\mathfrak{m}^N\subseteq I_w$.

    The proof is completed.
    \end{proof}

    We give the following lemma by \cite[Lemma 2.1]{BGY22}, while this lemma is also a direct consequence of the coherence of multiplier ideal sheaf when $\deg(\xi)<+\infty$.

    \begin{Lemma}\label{lem-K.xi.equal.0}
        Let $\varphi$ be a negative holomorphic function on $\Delta^n$, and $\xi\in\ell_1^{(n)}$. Denote by $K_{\xi,\Delta^n}^{\varphi}(\cdot)$ the Bergman kernel on $\Delta^n$ with respect to the functional $\xi$ and the weight function $\varphi$. Then $K_{\xi,\Delta^n}^{\varphi}(o)=0$ if and only if $(\xi\cdot f)(o)=0$ for every $(f,o)\in\mathcal{I}(\varphi)_o$.
    \end{Lemma}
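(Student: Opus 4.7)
The direction $(\Leftarrow)$ is immediate: any $g\in A^2(\Delta^n,e^{-\varphi})$ has $|g|^2e^{-\varphi}$ integrable on $\Delta^n$ and hence near $o$, so the germ $(g,o)$ lies in $\mathcal{I}(\varphi)_o$; the hypothesis forces $(\xi\cdot g)(o)=0$ for every such $g$, giving $K_{\xi,\Delta^n}^{\varphi}(o)=0$ by definition.

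For the substantive $(\Rightarrow)$ direction, assume $K_{\xi,\Delta^n}^{\varphi}(o)=0$, equivalently (by Lemma \ref{lem-bounded} and linearity) $(\xi\cdot g)(o)=0$ for every $g\in A^2(\Delta^n,e^{-\varphi})$, and fix $(f,o)\in\mathcal{I}(\varphi)_o$. The plan is to construct, for each integer $N\ge 1$, an extension $g_N\in A^2(\Delta^n,e^{-\varphi})$ whose Taylor jet at $o$ agrees with $f$'s through a degree tending to infinity with $N$, and then control the residual tail. Pick $r>0$ so that $f$ is holomorphic on $\{|z|<r\}\subset\Delta^n$ with $\int_{\{|z|<r\}}|f|^2e^{-\varphi}<+\infty$, and a smooth cutoff $\chi$ with $\chi\equiv 1$ on $\{|z|\le r/2\}$ and $\mathrm{supp}(\chi)\subset\{|z|<r\}$. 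Apply Hörmander's $L^2$ theorem on the bounded pseudoconvex domain $\Delta^n$ with plurisubharmonic weight $\varphi+2(N+n)\log|z|$ to solve $\bar\partial u_N=\bar\partial(\chi f)=\bar\partial\chi\cdot f$; because the datum is supported in $\{r/2\le|z|<r\}$, where $|z|^{-2(N+n)}$ is bounded by $(2/r)^{2(N+n)}$, the estimate
\[\int_{\Delta^n}|u_N|^2e^{-\varphi}|z|^{-2(N+n)}\le C_0(2/r)^{2(N+n)}\int_{\{|z|<r\}}|f|^2e^{-\varphi}\]
holds with $C_0$ independent of $N$. Local integrability of $|u_N|^2|z|^{-2(N+n)}$ near $o$ forces the holomorphic function $u_N|_{\{|z|<r/2\}}$ to vanish at $o$ to order $\ge N+1$, so $g_N:=\chi f-u_N$ is holomorphic on $\Delta^n$, lies in $A^2(\Delta^n,e^{-\varphi})$, and shares its $N$-jet at $o$ with $f$. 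The hypothesis yields $(\xi\cdot g_N)(o)=0$ and therefore
\[(\xi\cdot f)(o)=(\xi\cdot u_N)(o)=\sum_{|\alpha|\ge N+1}\xi_\alpha\,\frac{u_N^{(\alpha)}(o)}{\alpha!}.\]

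The remaining step is to send this tail to zero as $N\to\infty$. Since $\varphi$ is upper semicontinuous, $e^{-\varphi}$ is bounded below on $\{|z|\le r/4\}$; combining this with $|z|^{2(N+n)}\le(r/4)^{2(N+n)}$ on that set and the Hörmander estimate produces the geometric decay $\|u_N\|_{L^2(\{|z|\le r/4\})}^2\le C_1(1/2)^{2(N+n)}$. Submean value on the holomorphic $u_N$ transfers this to $\|u_N\|_{L^\infty(\{|z|\le r/8\})}\le C_2(1/2)^N$, and Cauchy's inequality on $\{|z|\le r/8\}$ gives $|u_N^{(\alpha)}(o)/\alpha!|\le C_2(1/2)^N(8/r)^{|\alpha|}$. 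Because $\xi\in\ell_1^{(n)}$ ensures $\sum_\alpha|\xi_\alpha|(8/r)^{|\alpha|}<+\infty$, the tail is majorized by $C_2(1/2)^N\sum_\alpha|\xi_\alpha|(8/r)^{|\alpha|}\to 0$, so $(\xi\cdot f)(o)=0$.

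The delicate point I anticipate is precisely this quantitative matching: the Hörmander estimate unavoidably carries a $(2/r)^{2(N+n)}$ growth inherited from the size of $\bar\partial\chi$, which must be exactly cancelled by the $(r/4)^{2(N+n)}$ gain from localizing to a smaller polydisc to produce the geometric factor $(1/2)^{2(N+n)}$; this factor then has to dominate the $(8/r)^{|\alpha|}$ weight appearing in Cauchy's inequality, and the strength of the $\ell_1^{(n)}$ hypothesis on $\xi$ (convergence of $\sum_\alpha|\xi_\alpha|\rho^{|\alpha|}$ for \emph{every} $\rho>0$, not merely for some $\rho$) is exactly what lets us use the weight $\rho=8/r$ for the small $r$ dictated by the local integrability of $f$.
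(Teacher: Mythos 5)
Your proof is correct, and for the substantive direction it takes a genuinely different route from the paper. The paper's own argument is essentially a citation: arguing contrapositively, it takes a germ $(f_0,o)\in\mathcal{I}(\varphi)_o$ with $(\xi\cdot f_0)(o)\neq 0$ and invokes \cite[Lemma 2.1]{BGY22} to produce a single global $F_0\in A^2(\Delta^n,e^{-\varphi})$ with $(\xi\cdot F_0)(o)\neq 0$, whence $K^{\varphi}_{\xi,\Delta^n}(o)>0$. You instead argue directly and self-containedly: assuming all global sections pair to zero with $\xi$, you approximate the germ $f$ by global sections $g_N=\chi f-u_N\in A^2(\Delta^n,e^{-\varphi})$ that match its $N$-jet at $o$, obtained from H\"ormander's estimate with the weights $\varphi+2(N+n)\log|z|$ (the bounded domain giving an $N$-independent constant, and negativity/upper semicontinuity of $\varphi$ giving both the jet-vanishing of $u_N$ and the unweighted $L^2$ control), and then kill the tail $\sum_{|\alpha|\ge N+1}\xi_\alpha u_N^{(\alpha)}(o)/\alpha!$ using the geometric decay of $u_N$ on a smaller ball together with the full strength of the $\ell_1^{(n)}$ condition, exactly as you flag in your final paragraph; the quantitative bookkeeping there is right. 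What each approach buys: the paper's proof is two lines long but outsources the analytic content to \cite{BGY22}, where the extension realizing the pairing is constructed; your proof needs no external lemma and is logically a bit more flexible, since you never need a single extension realizing $(\xi\cdot f)(o)$ exactly, only jet-approximants whose error tends to zero. (One cosmetic point: the hypothesis ``negative holomorphic function'' in the statement is evidently a typo for ``negative plurisubharmonic function''; your proof correctly uses plurisubharmonicity for H\"ormander and negativity, or local boundedness above, for the weight comparisons.)
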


    \begin{proof}
        If $(\xi\cdot f)(o)=0$ for every $(f,o)\in\mathcal{I}(\varphi)_o$, since every $f\in A^2\big(\Delta^n, e^{-\varphi}\big)$ satisfies $(f,o)\in \mathcal{I}(\varphi)_{o}$, then we have $K_{\xi,\Delta^n}^{\varphi}(o)=0$ by the definition of $K_{\xi,\Delta^n}^{\varphi}(o)$.
        
        Conversely, we assume there exists some $(f_0,o)\in\mathcal{I}(\varphi)_o$ with $(\xi\cdot f_0)(o)\neq 0$. Then according to \cite[Lemma 2.1]{BGY22}, we can find some $F_0\in A^2(\Delta^n,e^{-\varphi})$ such that $(\xi\cdot F_0)(o)\neq 0$. Thus, we get $K_{\xi,\Delta^n}^{\varphi}(o)\ge |(\xi\cdot F_0)(o)|^2/\int_{\Delta^n}|F_0|^2e^{-\varphi}>0$.
    \end{proof}

We now prove Corollary \ref{cor:pluripolar}.

    \begin{proof}[Proof of Corollary \ref{cor:pluripolar}]
        Since the desired result is local, shrinking $\Delta^{n+m}_{(z,w)}$ if necessary, we may assume that $\phi$ is negative on $\Delta^{n+m}_{(z,w)}$, and $\mathscr{I}$ is globally finitely generated (just like the assumption before Lemma \ref{lem-construct.xi-i} for $\mathscr{I}$). Denote $I_w\coloneqq (\mathscr{I}|_{H_{w}})_{(o_z,w)}$ for every $w$. We firstly prove this corollary under the following additional assumption:
    
    \emph{there exists a positive integer $N$ such that $\mathfrak{m}_w^N\subseteq I_w$ for every $w\in\Delta^m_{w}$, where $\mathfrak{m}_{w}$ denotes the maximal ideal of the ring $\mathcal{O}_{H_w,(o_z,w)}$}.

    In this case, by Lemma \ref{lem-construct.xi-i}, there exists an open subset $U_N$ of $\Delta^m_w$ and a collection of functional-valued functions $\{\xi_1(\cdot),\ldots,\xi_s(\cdot)\}$ on $\Delta^m_w$ such that

    (i) $\Delta_w^m\setminus U_N$ is an analytic subset of $\Delta_w^m$;
    
    (ii) $\xi_i(w)\in\ell_1^{(n)}$ and $\deg(\xi_i(w))\le N-1$, for each $i\in\{1,\ldots,s\}$ and $w\in \Delta_w^m$;

    (iii) each $\xi_i(\cdot)$ is holomorphic on $\Delta_w^m$, $i\in\{1,\ldots,s\}$;

    (iv) for every $w'\in U_N$,
    \begin{equation*}
        I_{w'}=\big\{f\in\mathcal{O}_{\mathbb{C}^n,o}\colon (\xi_i(w')\cdot f)(o)=0, \ i=1,\ldots,s\big\}.
    \end{equation*}

    Now we set
    \[\Psi_N(w')\coloneqq \sup_{1\le i\le s} \log K_{\xi_i(w'),H_{w'}}^{\phi|_{H_{w'}}}(o_{z}), \ w'\in \Delta^m_w.\]
    According to Theorem \ref{log-psh-wrt-xi}, we have that $\log \left(K_{\xi_i,H_{w'}}^{\phi|_{H_{w'}}}(o_{z})\right)$ is plurisubharmonic in $w'\in\Delta^m_w$ for every $i\in\{1,\ldots,s\}$, and thus $\Psi_N(w')$ is also a plurisubharmonic function on $\Delta^m_w$. We verify
    \begin{equation}\label{eq-Lambda.eq.Psi.-1.-infty}
        \Lambda(\phi,\mathscr{I})\cap U_N=\Psi_N^{-1}(-\infty).
    \end{equation}
    Let $w'\in U_N$. If $w'\in \Lambda(\phi,\mathscr{I})$, then $\mathcal{I}(\phi|_{H_{w'}})_{(o_z,w')}\subseteq I_{w'}$, which implies $(\xi_i\cdot f)(o_z)=0$ for every $(f,o_z)\in \mathcal{I}(\phi|_{H_{w'}})_{(o_z,w')}$ and $i=1,\ldots, s$. Lemma \ref{lem-K.xi.equal.0} indicates $\log K_{\xi_i(w'),H_{w'}}^{\phi|_{H_{w'}}}(o_{z})=-\infty$ for each $i$. We get $\Psi_N(w')=-\infty$. This shows $\Lambda(\phi,\mathscr{I})\cap U_N\subseteq\Psi_N^{-1}(-\infty)$. And we can also get the opposite direction by Lemma \ref{lem-K.xi.equal.0}.

    Now (\ref{eq-Lambda.eq.Psi.-1.-infty}) shows 
    \[\Lambda(\phi,\mathscr{I})\subseteq \Psi_N^{-1}(-\infty)\cup\big(\Delta^m\setminus U_N\big).\]
    Since $\Delta^m\setminus U_N$ is an analytic subset of $\Delta^m_{w}$ and $\Psi_N$ is plurisubharmonic on $\Delta^m_w$, it follows that $\Lambda(\phi,\mathscr{I})$ is either the whole $\Delta^m_w$ or a pluripolar subset of $\Delta^m_w$.

    Finally, let $\mathscr{I}$ be a general locally generated ideal sheaf. for every $N\ge 1$, define
        \begin{flalign*}
            \begin{split}
            \Lambda_N(\phi,\mathscr{I})\coloneqq &\left\{w'\in\Delta^m_w\colon \mathcal{I}\left(\phi|_{H_{w'}}\right)_{(o_z,w')}\subseteq (\mathscr{I}|_{H_{w'}})_{(o_z,w')}+\mathfrak{m}_{w'}^N\right\}\\
            =& \Lambda\big(\phi,\mathscr{I}+\mathcal{I}^N_{\Delta_w^m}\big),
            \end{split}
        \end{flalign*}
    where $\mathcal{I}_{\Delta^m_w}$ denotes the ideal sheaf along the subvariety $\Delta^m_w$ on $\Delta^{m+n}_{(z,w)}$. By the previous discussions, every $\Lambda_N(\phi,\mathscr{I})$ is either the whole $\Delta^m_w$ or a pluripolar subset of $\Delta^m_w$. Additionally, according to Lemma \ref{lem-Krull.lem} (Krull's Lemma), we can see
    \begin{equation}\label{eq-using.Krull.lem}
        \Lambda(\phi,\mathscr{I})=\bigcap_{k=1}^{\infty}\Lambda_N(\phi,\mathscr{I}).
    \end{equation}
    It follows that $\Lambda(\phi,\mathscr{I})$ is also either the whole $\Delta^m_w$ or a pluripolar subset of $\Delta^m_w$.

    Especially, if $I_w=I$ is invariant, by the proofs of Lemma \ref{lem-linear.algebra} and Lemma \ref{lem-construct.xi-i}, the set $U_N$ can be chosen as the whole $\Delta^m_w$. Thus, according to (\ref{eq-Lambda.eq.Psi.-1.-infty}) and (\ref{eq-using.Krull.lem}), the set $\Lambda(\phi,\mathscr{I})$ for such $\mathscr{I}$ is either the whole $\Delta_w^m$ or a complete pluripolar subset of $\Delta_w^m$. More precisely, in this case $\Lambda_N(\phi,\mathscr{I})=\Psi_N^{-1}(-\infty)$ holds for each $N\in\mathbb{Z}_+$. And on every relatively compact subset of $\Delta^m_w$, one can select a positive real number $c_N$ such $c_N\Psi_N$ is upper bounded by $1$ for each $N$. Taking $\Psi\coloneqq \sup_{N}c_N\Psi_N$, we conclude that $\Lambda(\phi,\mathscr{I})=\Psi^{-1}(-\infty)$ locally. This verifies the desired result.
    \end{proof}
 
\vspace{.1in} {\em Acknowledgements}. We are grateful to Professor Bo Berndtsson for bringing this problem to our attention and giving us many helpful discussions. We would also like to thank Dr. Zhitong Mi for carefully checking this note. The second named author was supported by National Key R\&D Program of China 2021YFA1003100, NSFC-11825101 and NSFC-12425101. The third named author was supported by China Postdoctoral Science Foundation BX20230402 and 2023M743719.

\bibliographystyle{alpha}
\bibliography{xbib}

\end{document}